\numberwithin{equation}{section}
\newtheorem{thm}[equation]{Theorem}
\crefname{thm}{Theorem}{Theorems}
\Crefname{thm}{Theorem}{Theorems}
\newtheorem{lemma}[equation]{Lemma}
\crefname{lemma}{Lemma}{Lemmas}
\Crefname{lemma}{Lemma}{Lemmas}
\crefname{cor}{Corollary}{Corollaries}
\Crefname{cor}{Corollary}{Corollaries}
\crefname{defi}{Definition}{Definitions}
\Crefname{defi}{Definition}{Definitions}
\crefname{rem}{Remark}{Remarks}
\Crefname{rem}{Remark}{Remarks}
\newcommand{\s}{{\sf S}}
\newcommand{\Md}{\!\mod}
\newcommand{\F}{\mathbb{F}}
\newcommand{\sgn}{\mathbf{\mathrm{sgn}}}
\renewcommand{\epsilon}{\varepsilon}
\renewcommand{\phi}{\varphi}
\newcommand{\xymat}{\xymatrix@R=6pt@C=10pt}
\newcommand{\la}{\lambda}
\newcommand{\be}{\beta}
\newcommand{\al}{\alpha}
\newcommand{\eps}{\epsilon}
\newcommand{\ga}{\gamma}
\newcommand{\de}{\delta}
\newcommand{\si}{\sigma}
\newcommand{\da}{{\downarrow}}
\newcommand{\ua}{{\uparrow}}
\def\Par{{\mathscr {P}}}
\def\C{{\mathbb {C}}}
\newcommand{\dbl}{\mathbf{\mathrm{dbl}}}
\newcommand{\bdbl}{\overline{\mathbf{\mathrm{dbl}}}}
\def\Md#1{\text{ }(\text{\rm mod } #1)\,}
\def\c{\overline{c}}
\begin{document}

\title[Row and column removal results for spin representations]{Generalised row and column removal results for decomposition numbers of spin representations of symmetric groups in characteristic 2}

\author{Lucia Morotti}
\address{Department of Mathematics, University of York, York, YO10 5DD, UK}
\email{lucia.morotti@york.ac.uk}

\thanks{The author thanks the referee for helpful comments.\\
The author was supported by the Royal Society grant URF$\backslash$R$\backslash$221047. While starting to work on this paper the author was working at the Mathematisches Institut of the Heinrich-Heine-Universität D\"usseldorf.}

\begin{abstract}
We prove generalised row and column removal results for decomposition numbers of spin representations of symmetric groups in characteristic 2, similar to Donkin's results for decomposition numbers of representations of symmetric groups.
\end{abstract}

\maketitle

\section{Introduction}

Row and column removal results for decomposition numbers of symmetric groups $\s_n$ were obtained by James \cite{j3} and later generalised by Donkin \cite{d}, allowing, in certain cases, to obtain decomposition numbers from decomposition numbers of smaller symmetric groups.

Following \cite{JamesBook}, for a partition $\la$ of $n$ let $S^\la$ be the corresponding Specht module, that is the irreducible representation of $\C\s_n$ labeled by $\la$, while for $\mu$ a $p$-regular partition on $n$ let $D^\mu$ be the irreducible representation of $\overline{\F}_p\s_n$ labeled by $\mu$. James' row and column removal results \cite[Theorems 5, 6]{j3} state that the multiplicity of $D^\mu$ in the reduction modulo $p$ of $S^\la$ is equal to the multiplicity of $D^{\overline{\mu}}$ in the reduction modulo $p$ of $S^{\overline{\la}}$, where $\overline{\la}$ and $\overline{\mu}$ are obtained from $\la$ and $\mu$ by removing their first rows or columns, provided the first rows or columns of $\la$ and $\mu$ have the same length.

In order to state the generalised versions of these results, we need to introduce some notation. For any $n\geq 0$ and $p$ a prime, let $\Par(n)$ be the set of partitions of $n$ and $\Par_p(n)$ the set of $p$-regular partitions of $n$. For a given partition $\la$, let $\la'$ denote its conjugate partition. Further, for any $m\geq 0$, let $\la^{r\leq m}$ be the partition consisting of the first $m$ rows of $\la$ and $\la^{r>m}$ be the partitions consisting of rows $m+1,m+2,\ldots$ of $\la$. Similarly define $\la^{c\leq m}$ and $\la^{c>m}$ considering columns instead of rows. Such restrictions on which rows and columns are considered can be combined, for example $\la^{r>m,c\leq \ell}$ consists of nodes of $\la$ which are in rows larger then $m$ and columns up to $\ell$ (up to identifying partitions and their Young diagrams and shifting the new diagram by $m$ rows to indeed obtain the Young diagram of a partition). 

With these notations set, we are now able to state Donkin's generalised row and column removal results for decomposition numbers:

\begin{thm}\cite[Theorem 5]{d}\label{T1}
Let $n\geq 0$, $m\geq 1$, $\la\in\Par(n)$ and $\mu\in\Par_p(n)$. If $|\la^{r\leq m}|=|\mu^{r\leq m}|$ then
\[[S^\la:D^{\mu}]=[S^{\la^{r\leq m}}:D^{\mu^{r\leq m}}]\cdot[S^{\la^{r>m}}:D^{\mu^{r>m}}].\]
\end{thm}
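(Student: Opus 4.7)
My strategy is to lift the identity to the polynomial representation theory of $\mathrm{GL}_N$ (equivalently, the Schur algebra $S(N,n)$ for $N\ge n$), where the Levi subgroup $\mathrm{GL}_m\times\mathrm{GL}_{N-m}$ provides a natural factorisation, and then descend via the Schur functor.

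First, I invoke the Schur functor $f\colon S(N,n)\md\to \F_p\s_n\md$, which is exact and satisfies $f(\Delta(\nu))=S^\nu$ for every partition $\nu$ of $n$, $f(L(\nu))=D^\nu$ for $\nu$ $p$-regular, and $f(L(\nu))=0$ otherwise. Consequently $[S^\la:D^\mu]=[\Delta(\la):L(\mu)]$ when $\mu$ is $p$-regular, and since $p$-regularity of $\mu$ is inherited by both $\mu^{r\le m}$ and $\mu^{r>m}$, the analogous identities hold on the right-hand side. It therefore suffices to prove the multiplicative formula for composition multiplicities in $S(N,n)\md$.

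Second, set $a=|\la^{r\le m}|=|\mu^{r\le m}|$ and consider the Levi $L=\mathrm{GL}_m\times\mathrm{GL}_{N-m}\le\mathrm{GL}_N$ with associated parabolic $P$. I would use Donkin's formalism of generalised Schur algebras attached to finite saturated sets of polynomial weights: there should be a saturated subset $\pi$ containing $\la$ and $\mu$, cut out (among weights $\le\la$ in dominance) by a condition on the partial sum $\nu_1+\dots+\nu_m$, such that the generalised Schur algebra $S(\pi)$ is Morita equivalent to $S(m,a)\otimes S(N-m,n-a)$, with
\[\Delta(\la)\longleftrightarrow\Delta(\la^{r\le m})\boxtimes\Delta(\la^{r>m}),\qquad L(\mu)\longleftrightarrow L(\mu^{r\le m})\boxtimes L(\mu^{r>m}).\]
Because composition multiplicities in tensor products of module categories factor, this yields
\[[\Delta(\la):L(\mu)]=[\Delta(\la^{r\le m}):L(\mu^{r\le m})]\cdot[\Delta(\la^{r>m}):L(\mu^{r>m})],\]
and the Schur-functor identity of the first step then finishes the proof.

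The main obstacle is the second step: identifying the correct saturated set and proving the Morita equivalence. Concretely, one must show that the partial-sum condition $\nu_1+\dots+\nu_m=a$ interacts well with the dominance order on the slice of weights $\nu\le\la$, so that Donkin's truncation $O_\pi$ is exact on the modules of interest and sends standard to standard and simple to simple in the multiplicative way claimed. In effect one has to establish that every composition factor $L(\nu)$ of $\Delta(\la)$ with the same block-type data as $L(\mu)$ satisfies $\nu_1+\dots+\nu_m=a$, ruling out any ``leakage'' of weight across the row-$m$ boundary; this is the combinatorial engine of Donkin's proof, and once granted, the tensor-product factorisation and the Schur-functor reduction combine to give the theorem.
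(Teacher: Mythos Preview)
The paper does not supply a proof of this statement: it is quoted verbatim as \cite[Theorem~5]{d} and used as a black box in the proofs of \cref{t1,t2}. There is therefore nothing in the paper to compare your argument against beyond the citation itself.

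Your sketch is, in outline, precisely Donkin's original argument in \cite{d}: pass from $\F_p\s_n$ to the Schur algebra $S(N,n)$ via the Schur functor, use the Levi $\mathrm{GL}_m\times\mathrm{GL}_{N-m}$ to isolate the bidegree $(a,n-a)$ with $a=|\la^{r\le m}|$, and appeal to a generalised Schur algebra / truncation to a saturated set of weights to obtain the tensor factorisation. Two small points of precision. First, the saturated set one actually takes is $\{\nu:\nu\unlhd\la,\ \nu_1+\dots+\nu_m\ge a\}$ (equivalently, one works below $\la$ in dominance and notes that every such $\nu$ automatically has $\nu_1+\dots+\nu_m\ge a$, so the condition $=a$ cuts out exactly the weights that can contribute to $[\Delta(\la):L(\mu)]$ once $\mu_1+\dots+\mu_m=a$ is imposed); your phrase ``cut out \dots\ by a condition on the partial sum'' is the right idea but should be stated as an inequality to ensure saturation. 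Second, what Donkin actually proves is an algebra isomorphism between the relevant generalised Schur algebra and $S(m,a)\otimes S(N-m,n-a)$, not merely a Morita equivalence; this is slightly stronger and makes the matching of standards and simples immediate. With those adjustments your plan is Donkin's proof.
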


\begin{thm}\cite[Theorem 6]{d}\label{T2}
Let $n\geq 0$, $m\geq 1$, $\la\in\Par(n)$ and $\mu\in\Par_p(n)$. If $|\la^{c\leq m}|=|\mu^{c\leq m}|$, $\mu\unrhd\la$ and $b=\la'_m$, then
\[[S^\la:D^{\mu}]=[S^{\la^{r>b,c\leq m}}:D^{\mu^{r>b,c\leq m}}]\cdot[S^{\la^{c>m}}:D^{\mu^{c>m}}].\]
\end{thm}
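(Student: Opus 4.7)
The plan is to lift the statement to the Schur algebra $S(n,N)$ for $N$ sufficiently large, where by the Schur functor the decomposition number $[S^\la:D^\mu]$ (for $\mu$ $p$-regular) equals the composition multiplicity $[V(\la):L(\mu)]$ of the simple module $L(\mu)$ in the Weyl module $V(\la)$. The column removal statement should then be the Ringel dual of a row removal statement, since the Ringel duality functor for $S(n,N)$ interchanges Weyl and tilting modules while transposing highest-weight labels.

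Concretely, I would first pass through Ringel duality to rewrite $[V(\la):L(\mu)]$ as a filtration multiplicity $(T(\la'):V(\mu'))$ of $V(\mu')$ in the tilting module $T(\la')$. The hypothesis $|\la^{c\leq m}|=|\mu^{c\leq m}|$ then becomes the row-removal hypothesis $|(\la')^{r\leq m}|=|(\mu')^{r\leq m}|$ on the transposed partitions. Second, I would apply a row removal result for tilting filtrations (the analogue of \Cref{T1} in the tilting setting, which Donkin proves by Levi restriction of tilting modules) to obtain
\[(T(\la'):V(\mu'))=(T((\la')^{r\leq m}):V((\mu')^{r\leq m}))\cdot(T((\la')^{r>m}):V((\mu')^{r>m})).\]
Finally, I would transpose each factor back via Ringel duality, using $b=\la'_m$ to identify the shape of the ``left columns'' piece as $\la^{r>b,c\leq m}$ (the part of $\la$ that fits in the first $m$ columns but lies below row $b$) and the ``right columns'' piece as $\la^{c>m}$, with matching pieces for $\mu$.

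The main obstacle will be the bookkeeping around $p$-regularity: $\mu$ is $p$-regular whereas $\mu'$ is $p$-restricted, so matching decomposition numbers between symmetric group and Schur algebra languages requires care, and the Mullineux map is likely to enter. The dominance hypothesis $\mu\unrhd\la$ together with the choice $b=\la'_m$ is what should guarantee that both factors on the right-hand side are genuine decomposition numbers of smaller symmetric groups, with the restricted shapes being authentic partitions of the correct sizes; without these hypotheses, spurious contributions from partitions above $\mu$ in dominance would spoil the factorization. An induction on $m$, with base case $m=1$ given by James' classical first-column removal \cite[Theorem 6]{j3}, could streamline the final verification.
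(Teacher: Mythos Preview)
This theorem is not proved in the present paper; it is quoted from Donkin \cite[Theorem~6]{d} and used only as an external input (the step $[S^{\psi+(1^{2b})}:D^{\pi+(1^{2b})}]=[S^{\psi}:D^{\pi}]$ in \cref{st2} is an instance of it with $m=1$). There is therefore no proof here against which to compare your proposal.

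On the proposal itself: lifting to the Schur algebra is indeed Donkin's setting, but his 1985 argument predates tilting modules and Ringel duality for algebraic groups, so the mechanism you describe is not the one in \cite{d}. More concretely, the identity you invoke, $[V(\la):L(\mu)]=(T(\la'):V(\mu'))$, has the two labels interchanged: the standard formula for $\GL_n$ is $(T(\nu):\nabla(\tau))=[\nabla(\tau'):L(\nu')]$, which gives $[V(\la):L(\mu)]=(T(\mu'):\nabla(\la'))$ instead. With that correction your outline can be made to work (apply row removal for tilting multiplicities to the pair $(\mu',\la')$, transpose back, and then apply \cref{T1} once more to peel the common rectangle $(m^b)$ from $\la^{c\leq m}$ and $\mu^{c\leq m}$; the hypotheses $\mu\unrhd\la$ and $b=\la'_m$ are precisely what force $\mu'_m\geq b$, so that this rectangle is present in both). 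This is, however, heavier than necessary: Donkin's own argument works directly with Weyl modules and restriction to Levi subgroups of $\GL_n$, and column removal can alternatively be deduced from \cref{T1} by conjugating partitions via the contravariant duality on polynomial $\GL_n$-modules, with no tilting theory required. Your remark about Mullineux is a red herring in this context: the passage between symmetric-group and Schur-algebra decomposition numbers for $p$-regular $\mu$ is direct via the Schur functor and does not involve the Mullineux map.
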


The cases with $m=1$ are James' results (for the column removal version the assumption $\mu\unrhd\la$ could be dropped in this case due to well known results on the shape of the decomposition matrices of symmetric groups).

Let $\widetilde\s_n$ be a double cover of the symmetric group $S_n$. Faithful irreducible representations of $\widetilde\s_n$ are called spin representations. In characteristic $0$, (pairs of) irreducible spin representations of $\s_n$ are labeled by strict partitions of $n$, as first shown in \cite{schur}. For $\la$ such a partition we write $S(\la,0)$ or $S(\la,\pm)$ for the corresponding representations. When working in characteristic 2, we will identify $S(\la,0)$ and $S(\la,\pm)$ with their reduction modulo $2$. Reductions modulo $2$ of spin representations can be viewed as characteristic $2$ representations of the corresponding symmetric group, thus their composition factors are the modules $D^\mu$ for 2-regular partitions $\mu$. In odd characteristic reductions modulo $p$ of spin representations remain spin representations. This explains why the two ($p=2$ and $p$ odd) cases are studied separately.

Results on the shape of the decomposition matrices of spin representations of symmetric groups in characteristic 2 were obtained by Benson \cite{ben} and later improved by Bessenrodt and Olsson \cite{bo}. In particular Benson showed that, provided $\dbl(\la)$ is $2$-regular, the multiplicity of $D^{\dbl(\la)}$ in $S(\la,\eps)$ is a certain specific power of $2$. Here, for $\la=(\la_1,\ldots,\la_h)$ with $\la_h>0$, its double $\dbl(\la)$ is the partition
\[\dbl(\la)=(\lceil (\la_1+1)/2\rceil,\lfloor (\la_1-1)/2\rfloor),\ldots,\lceil (\la_h+1)/2\rceil,\lfloor (\la_h-1)/2\rfloor)).\]
This result, together with results on spin decomposition numbers in characteristic $2$ for spin representations labeled by partitions with at most $2$-parts (see \cite{m3}), shows evidence of row and column removal results for decomposition numbers of spin representations in characteristic 2 and thus leads to study them. These removal results, which are stated below, are similar to the above stated results for (generalised) row and columns removal of decomposition numbers of symmetric groups. Doubling and powers of 2 appearing in \cite{ben,bo} are reflected in the row and column removal results for spin representations.

In the next two theorems, $\delta,\eps,\eps'\in\{0,\pm\}$ are determined by the partitions labeling the corresponding spin representations. As $S(\mu,+)$ and $S(\mu,-)$ reduce in the same way in characteristic 2 whenever $\mu$ labels two irreducible spin representations in characteristic 0 by \cite[Theorem 7.1]{s}, the choices of representative, if needed, does not affect the decomposition numbers in the next two theorems.

\begin{thm}\label{t1}
Let $p=2$, $n\geq 0$, $m\geq 1$ and $\la,\mu\in\Par_2(n)$. If $|\la^{r\leq m}|=|\mu^{r\leq 2m}|$ then
\[[S(\la,\de):D^{\mu}]=2^a[S(\la^{r\leq m},\eps):D^{\mu^{r\leq 2m}}]\cdot[S(\la^{r>m},\eps'):D^{\mu^{r>2m}}]\]
with $a=1$ if $\eps,\eps'\in\{\pm\}$ or $a=0$ else.
\end{thm}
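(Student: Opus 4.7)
The plan is to deduce \Cref{t1} from Donkin's classical row removal result \Cref{T1} by passing through the doubling construction. Although the spin Specht modules $S(\la,\de)$ are not classical Specht modules, their reductions modulo $2$ should be governed by the reduction of $S^{\dbl(\la)}$ up to an explicit power of $2$, and since doubling is compatible with a horizontal cut of $\la$, the hypothesis $|\la^{r\leq m}|=|\mu^{r\leq 2m}|$ translates into the hypothesis of \Cref{T1} applied at row $2m$ on the classical side.

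First I would establish a clean identity of the form
\[ [S(\la,\de):D^\mu]=2^{b(\la,\de)}\,[S^{\dbl(\la)}:D^\mu], \]
where $b(\la,\de)\in\Z_{\geq 0}$ depends only on whether $\la$ admits a pair of associate spin characters ($\de=\pm$) or a single self-associate one ($\de=0$)\,---\,equivalently, on the parity of the number of even parts of $\la$. Such an identity should come from comparing the $2$-modular Brauer character of $S(\la,\de)$ with the Brauer character of $S^{\dbl(\la)}$ on $2$-regular classes of $\s_n$, using classical expressions for spin character values on odd classes (in the spirit of \cite{ben,bo}), together with linear independence of the irreducible Brauer characters $\varphi^\mu$. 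The argument needs to be carried out uniformly, since $\dbl(\la)$ may fail to be $2$-regular for certain $\la$.

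Next I would record the compatibility of doubling with horizontal splitting: since $\dbl$ is defined row-by-row and preserves total size, one has
\[ \dbl(\la)^{r\leq 2m}=\dbl(\la^{r\leq m}), \qquad \dbl(\la)^{r>2m}=\dbl(\la^{r>m}), \]
with only trivial bookkeeping required when $\la_m\in\{0,1\}$ produces a trailing zero row. Because doubling preserves size, the hypothesis $|\la^{r\leq m}|=|\mu^{r\leq 2m}|$ is equivalent to $|\dbl(\la)^{r\leq 2m}|=|\mu^{r\leq 2m}|$, so \Cref{T1} applied at row $2m$ yields
\[ [S^{\dbl(\la)}:D^\mu]=[S^{\dbl(\la^{r\leq m})}:D^{\mu^{r\leq 2m}}]\cdot[S^{\dbl(\la^{r>m})}:D^{\mu^{r>2m}}]. \]
Substituting the identity of the first step into both sides of this factorisation then produces \Cref{t1}.

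The main obstacle lies in the first step and the subsequent bookkeeping: pinning down the spin-to-classical relation precisely and tracking the exact power of $2$ in every case. The factor $2^a$ in \Cref{t1} arises as the difference $b(\la,\de)-b(\la^{r\leq m},\eps)-b(\la^{r>m},\eps')$, so proving the theorem reduces to a case analysis over the four combinations of $\eps,\eps'\in\{0,\pm\}$. The key structural input is that the number of even parts of $\la$ equals the sum of those of $\la^{r\leq m}$ and $\la^{r>m}$; this additivity of parity is exactly what should force the factor of $2$ to appear precisely when both $\eps$ and $\eps'$ lie in $\{\pm\}$, matching the statement of the theorem.
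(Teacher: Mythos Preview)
Your first step is the crucial one, and it fails: the identity
\[
[S(\la,\de):D^\mu]\;=\;2^{b(\la,\de)}\,[S^{\dbl(\la)}:D^\mu]
\]
is simply not true. Take $\la=(4)$, so that $\dbl(\la)=(3,1)$. By \cref{bs} the reduction of $S((4),\pm)$ is exactly $D^{(3,1)}$, so $[S((4),\pm):D^{(4)}]=0$. On the other hand $S^{(3,1)}$ has dimension~$3$ while $D^{(3,1)}$ has dimension~$2$ in characteristic~$2$, hence $[S^{(3,1)}:D^{(4)}]=1$. No power of $2$ rescues this. The heuristic that spin Brauer characters on odd classes are proportional to those of $S^{\dbl(\la)}$ is not correct; Benson's result \cref{regs} only identifies the \emph{leading} composition factor and its multiplicity, not the whole column of the decomposition matrix. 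Your exponent bookkeeping in the last paragraph is fine and would indeed give $a=a_{\la^{r\leq m}}a_{\la^{r>m}}$, but it is built on a false foundation.

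The paper's proof avoids any direct comparison of $S(\la,\de)$ with a classical Specht module. Instead it works entirely through the basic spin module $S((n))$: first one writes $[D^{\dbl(n)}]$ as an explicit integer combination of permutation modules $M^{(n-h,h)}$ (\cref{bsm}), then one uses Stembridge's shifted Littlewood--Richardson rule (\cref{L101123}) to express $[S(\la)]$ inside $\sum_\nu e_\nu[S^\nu\otimes S((n))]$ modulo terms that cannot contain $D^\mu$. The row splitting is then proved at the level of these tensor products $S^\nu\otimes M^{(n-h,h)}$, where it reduces to a combinatorial identity on ordinary Littlewood--Richardson coefficients (\cref{L111223_4}) together with Donkin's \cref{T1}. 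The matching of the two pieces is handled by \cref{L151223,L111223_3}, and the power of $2$ is tracked through the parities $a_{(\ell)},a_{(n-\ell)},a_{\la^{r\leq m}},a_{\la^{r>m}}$ rather than through a global factor $b(\la,\de)$.
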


\begin{thm}\label{t2}
Let $p=2$, $n\geq 0$, $m\geq 1$ and $\la,\mu\in\Par_2(n)$. If $|\la^{c\leq 2m}|=|\mu^{c\leq m}|$, $\mu\unrhd\bdbl(\la)$ and $b=\la'_{2m}$, then
\[[S(\la,\de):D^{\mu}]=2^a[S(\la^{r>b,c\leq 2m},\eps'):D^{\mu^{r>2b,c\leq m}}]\cdot[S(\la^{c>2m},\eps):D^{\mu^{c>m}}]\]
with $a=1$ if $\eps,\eps'\in\{\pm\}$ or $a=0$ else.
\end{thm}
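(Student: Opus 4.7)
My plan is to reduce Theorem~\ref{t2} to Donkin's column-removal result (Theorem~\ref{T2}) via the doubling map, in parallel with the reduction of Theorem~\ref{t1} to Theorem~\ref{T1}. The first step is a \emph{bridge formula} that expresses the spin decomposition number $[S(\la,\de):D^\mu]$ as a specific power of $2$ times the ordinary decomposition number $[S^{\bdbl(\la)}:D^\mu]$ (for those $\mu$ that can actually appear). Such a formula is strongly suggested by Benson's evaluation of $[S(\la,\eps):D^{\dbl(\la)}]$ as an explicit power of $2$, combined with the Bessenrodt--Olsson shape results on where the spin decomposition matrix can be non-zero; moreover it is what is needed to make the exponent $2^a$ in the statement come out cleanly.

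Next I would verify that the spin hypothesis $|\la^{c\leq 2m}|=|\mu^{c\leq m}|$ together with $\mu\unrhd\bdbl(\la)$ implies the Donkin hypothesis $|\bdbl(\la)^{c\leq m}|=|\mu^{c\leq m}|$. The naive identity $|\dbl(\la)^{c\leq m}|=|\la^{c\leq 2m}|$ holds generically, reflecting the fact that $\dbl$ bundles pairs of columns of $\la$ into single columns of $\dbl(\la)$, but it can fail when $2m$ bisects a row of $\la$ of even length; in the latter situation a small combinatorial check shows that no strict $\mu$ simultaneously meets the size hypothesis and dominates $\bdbl(\la)$, so Theorem~\ref{t2} is vacuous there. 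Donkin's Theorem~\ref{T2} then factorises $[S^{\bdbl(\la)}:D^\mu]$ as a product of ordinary decomposition numbers for $\bdbl(\la)^{r>b',c\leq m}$ and $\bdbl(\la)^{c>m}$, where $b'=\bdbl(\la)'_m$ is identified with $2b=2\la'_{2m}$.

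I would then establish the truncation-doubling commutations
\[\bdbl(\la)^{r>b',c\leq m}=\bdbl(\la^{r>b,c\leq 2m}),\qquad \bdbl(\la)^{c>m}=\bdbl(\la^{c>2m}),\]
i.e.\ that truncating first and then doubling agrees with doubling first and then truncating on the relevant pieces. Re-applying the bridge formula to each factor converts the two ordinary decomposition numbers back into spin decomposition numbers for the smaller strict partitions $\la^{r>b,c\leq 2m}$ and $\la^{c>2m}$, with an appropriate choice of signs $\eps,\eps'$ and an accompanying power of $2$ in each step. The exponents coming from the two factors combine with the one from the bridge applied on the left-hand side, leaving the residual $2^a$ as stated: one checks from the sign bookkeeping in the bridge that $a=1$ precisely when both smaller spin Specht modules split into $(\cdot,\pm)$-pairs in characteristic $0$ while the total one does not, or vice versa.

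The main obstacle I foresee is this combinatorial compatibility step: rigorously establishing the column-hypothesis translation and the truncation-doubling commutations. The interaction of $\dbl$ and $\bdbl$ with row and column cut-offs is delicate, particularly around rows of even length in $\la$ that straddle the cut-off $2m$, and the dominance condition $\mu\unrhd\bdbl(\la)$ is needed precisely to rule out the pathological configurations. Once this combinatorial framework is in place, the sign-and-power-of-$2$ bookkeeping is a largely mechanical, if technical, exercise parallel to the one carried out in Theorem~\ref{t1}.
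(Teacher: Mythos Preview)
The central step of your plan --- the ``bridge formula'' expressing $[S(\la,\de):D^\mu]$ as a fixed power of $2$ times $[S^{\bdbl(\la)}:D^\mu]$ --- is false, so the whole reduction collapses. Take $\la=(8)$ and $\mu=(7,1)$, both $2$-regular with $\mu\unrhd\bdbl(\la)=(4,4)$. By \cref{bs} one has $[S((8),\pm)]=[D^{(5,3)}]$, so $[S((8),\pm):D^{(7,1)}]=0$; on the other hand the James two-row formula gives $[S^{(4,4)}:D^{(7,1)}]=g_{7,3}=1$. Thus there is no exponent $c$ with $[S(\la,\de):D^\mu]=2^c[S^{\bdbl(\la)}:D^\mu]$, and the spin decomposition numbers are genuinely \emph{not} determined by the ordinary ones via doubling alone. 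Benson's result pins down only the leading entry $[S(\la,\eps):D^{\dbl(\la)}]$; it says nothing about higher $\mu$. (There is also a smaller issue: $\bdbl(\la)'_m$ equals $2b$ or $2b+1$ depending on whether $2m-1$ is a part of $\la$, so the identification $b'=2b$ is not automatic.)

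The paper avoids any such bridge. It first uses the column hypothesis together with $\mu\unrhd\bdbl(\la)$ to force a compatible \emph{row} split at rows $b$ and $2b$, and applies the already-proved row-removal \cref{t1}. This reduces \cref{t2} to the stability statement
\[
[S(\nu):D^\pi]=[S(\nu+(2^b)):D^{\pi+(1^{2b})}]
\]
for $h(\nu)=b$ and $2b-1\le h(\pi)\le 2b$. That stability is established not by comparing with $S^{\bdbl(\nu)}$, but by expanding the basic spin module $D^{\dbl(N)}$ as an explicit alternating sum of permutation modules $M^{(N-h,h)}$ (\cref{bsm}), writing $[S(\nu)]$ via tensoring Specht modules with $S((N))$ (\cref{L101123,L131123_3}), and then matching the resulting Specht--permutation multiplicities term by term using \cref{L181223}; Donkin's \cref{T2} enters only at the very end, for each individual Specht factor $[S^{\psi+(1^{2b})}:D^{\pi+(1^{2b})}]=[S^\psi:D^\pi]$. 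In short, the argument passes through basic spin modules and shifted-tableaux combinatorics rather than through $S^{\bdbl(\la)}$.
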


In view of known results on the shapes of decomposition matrices, 
it can be checked that the assumptions $\mu\unrhd\la$ in Theorem \ref{T2} resp. $\mu\unrhd\bdbl(\la)$ in Theorem \ref{t2} can be dropped provided the multiplicities $[S^{\la^{r>b,c\leq m}}:D^{\mu^{r>b,c\leq m}}]$ resp. $[S(\la^{r>b,c\leq 2m},\eps'):D^{\mu^{r>2b,c\leq m}}]$ are set to be 0 if the labeling partitions have different sizes or if the labeling partitions ${\mu^{r>b,c\leq m}}$  resp. ${\mu^{r>2b,c\leq m}}$ are not $p$-regular resp. not $2$-regular.

We also note that versions of \cref{t1,t2} which are not taking in considerations doubling of partitions (that is removing $m$ rows or $m$ columns from both $\la$ and $\mu$) are wrong, even after possible rescaling by a non-zero scalar. For example it can be checked combining \cite{ben,Wales} that $[S((n),\eps):D^{(n)}]$ is 0 if $n\geq 3$ while it is 1 for $n\leq 2$.

In \cref{s2,s3} we will introduce some notations and some basic results on decomposition numbers. In \cref{s4} we will write reductions modulo 2 of basic spin representations as linear combinations of certain modules of symmetric groups. In \cref{lr,st} we will study Littlewood-Richardson coefficients and shifted tableaux and use these results to get information on certain tensor products with permutation modules and basic spin representations. We will then prove \cref{t1,t2} in \cref{st1,st2}. Though the results obtained are similar, the methods used in this paper are rather different than those used in \cite{d,j3}.


\section{Basic notations}\label{s2}

As in the introduction, for $n\geq 0$, let $\s_n$ be the symmetric group on $n$ letters and $\widetilde{\s}_n$ be a double cover of $\s_n$. Double covers of symmetric groups and their representations have been studied by Schur \cite{schur}, see also \cite{s}. By definition there exists $z\in\widetilde{\s}_n$ central of order 2 with $\widetilde{\s}_n/\langle z\rangle\cong\s_n$. We will be working with representations over the field $\C$ when working in characteristic $0$ and over $\overline\F_2$ when working in characteristic $2$.

Since $z$ is central of order 2, it acts as $\pm 1$ on any irreducible representation. Irreducible representations of $\widetilde{\s}_n$ on which $z$ acts as $1$ can be naturally viewed as representations of $\s_n\cong\widetilde{\s}_n/\langle z\rangle$. Representations of $\widetilde{\s}_n$ on which $z$ acts as $-1\not=1$ are called spin representations. Note that $\s_n$ has no irreducible spin representation in characteristic 2  and that irreducible 
spin representations in characteristic 0 can always be viewed as representations of $\s_n$ when reduced modulo 2.

It is well know that irreducible representations of $\s_n$ are labeled by partitions in characteristic $0$ and by $p$-regular partitions (that is partitions with no part repeated $p$ or more times) in characteristic $p$, see for example \cite{JamesBook}. In characteristic 0, irreducible spin representations, or pairs of them, are labeled by strict partitions, that is partitions with no repeated part, see \cite{schur,s}. We define $\Par(n)$ to be the set of partitions of $n$ and $\Par_2(n)$ to be the set of $2$-regular (or strict) partitions of $n$.

As in the introduction and \cite[\S4, \S7]{JamesBook}, for $\la\in\Par(n)$ we let the Specht module $S^\la$ to be the irreducible representation of $\C\s_n$ labeled by $\la$. We will identify $S^\la$ also with its reduction modulo 2. Similarly for $\mu\in\Par_2(n)$ we let $D^\mu$ be the irreducible representation of $\overline{\F}_2\s_n$ indexed by $\mu$. For $\nu$ any composition of $n$ we also define $M^\nu$ to be the permutation module $1\ua_{\s_\nu}^{\s_n}$, with
\[\s_{\nu}\cong\s_{\{1,\ldots,\nu_1\}}\times\s_{\{\nu_1+1,\ldots,\nu_1+\nu_2\}}\times\ldots\subseteq\s_n.\]

As mentioned above, strict partitions label either one or two irreducible spin representations in characteristic 0. In order to distinguish between the two cases, we say that a partition is even if it has an even number of even parts and that it is odd otherwise. Thus the parity of $\la$ is just the parity of $|\la|-h(\la)$, with $|\la|=\la_1+\la_2+\ldots$ the size of $\la$ and $h(\la)$ the number of parts of $\la$. We also define $a(\la):=0$ if $\la$ is even or $a(\la):=1$ if $\la$ is odd. To keep notation simpler, we will drop double parenthesis. For example, we will write $a(n)$ for $a((n))$.

By \cite[Theorem 7.1]{s} or \cite[p. 235]{schur}, if $\la\in\Par_2(n)$ is even then $\la$ labels a unique spin representation $S(\la,0)$ of $\C\s_n$. If instead $\la\in\Par_2(n)$ is odd, then $\la$ labels 2 irreducible spin representations $S(\la,\pm)$ of $\C\s_n$. In this case we also have that $S(\la,+)\cong S(\la,-)\otimes\sgn$ with $\sgn$ the sign representation of $\s_n$, see \cite[Theorem 7.1]{s}. In order to keep the formulation of some of the following results easier, for $\la\in\Par_2(n)$ we define modules $S(\la)$ to be either $S(\la,0)$ is $\la$ is even or $S(\la,+)\oplus S(\la,-)$ is $\la$ is odd. 
When working in characteristic 2, we identify the modules $S(\la,0)$ or $S(\la,\pm)$ as well as the modules $S(\la)$ with their reductions modulo $2$.

We also introduce some further notations for partitions which will be used in the paper. We identify partitions and their Young diagrams, that is the set of nodes $\{(i,j):1\leq j\leq\al_i\}$ for $\al$ a partition. Further we define $\al'$ to be the conjugate partition of $\al$, that is the partition obtained by transposing the Young diagram of $\al$ across the main diagonal. We also use the English notation for Young diagrams, that is rows increases when moving down (this will play a role when studying tableaux in \cref{lr,st} and speaking about nodes above or below other nodes). Further for $\al,\be$ partitions, we write $\al\sqcup\be$ for the partition obtained by rearranging the parts of the composition $(\al_1,\ldots,\al_{h(\al)},\be_1,\ldots,\be_{h(\be)})$ in weakly decreasing order and we write $\al+\be$ for the partition $(\al_1+\be_1,\al_2+\be_2,\ldots)$. We also let $\unrhd$ is the dominance order on partitions.

In the introduction we gave the definition of double of a partition that was used in \cite{ben}. In \cite{bo} Bessenrodt and Olsson defined a different double $\bdbl(\la)$ of a partition $\la$ through
\begin{align*}
\bdbl(\la)&=(\lceil \la_1/2\rceil,\lfloor \la_1/2\rfloor),\ldots,\lceil \la_{h(\la)}/2\rceil,\lfloor \la_{h(la)}/2\rfloor)).
\end{align*}
Note that in general $\dbl(\la)\not=\bdbl(\la)$ and that $\dbl(\la)$ (as defined in the introduction) and $\bdbl(\la)$ are not necessarily partitions but only compositions. However $\bdbl(\la)$ is a partition if $\la$ is strict (or more generally when $\la$ has no repeated odd part).

As we will use both inner and outer tensor products of representations, we will use notation $V\otimes W$ for inner tensor products and $V\boxtimes W$ for outer tensor products.

\section{Decomposition numbers}\label{s3}

In this section we review some known results on decomposition numbers which will be used in the proofs of \cref{t1,t2}. We start with results on the shape of the decomposition matrices for representations and spin representations of symmetric groups in characteristic 2. 

\begin{lemma}\cite[Corollary 12.2]{JamesBook}\label{reg} 
Let $p=2$, $\la\in\Par(n)$ and $\mu\in\Par_2(n)$. If $[S^\la:D^\mu]>0$ then $\mu\unrhd\la$. Further if $\la\in\Par_2(n)$ then $[S^\la:D^\la]=1$.
\end{lemma}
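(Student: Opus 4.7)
The plan is to follow James's classical approach, realizing $S^\la$ as the submodule of the permutation module $M^\la$ spanned by polytabloids, and defining, for $\la\in\Par_2(n)$, $D^\la:=S^\la/\rad S^\la$, where the radical is taken with respect to James's $\s_n$-invariant bilinear form on $M^\la$. James's submodule theorem (every submodule of $M^\la$ either contains $S^\la$ or is contained in $S^{\la\perp}$) ensures $D^\la$ is simple, and a counting argument shows that $\{D^\la:\la\in\Par_2(n)\}$ exhausts the irreducibles of $\overline{\F_2}\s_n$.

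For the triangularity $[S^\la:D^\mu]>0\Rightarrow\mu\unrhd\la$, I would induct downward on the dominance order on $\la$. The base case $\la=(n)$ is immediate since $S^{(n)}=D^{(n)}=\1$. For the inductive step, Young's rule in characteristic $0$ yields a Specht filtration of $M^\la$ in characteristic $2$ with factors $S^\nu$ for $\nu\unrhd\la$. Since $S^\la\subseteq M^\la$, every composition factor of $S^\la$ is a composition factor of $M^\la$. To conclude that every such composition factor is of the form $D^\pi$ with $\pi\unrhd\la$, I would invoke the Young module decomposition $M^\la=\bigoplus(M^\la:Y^\mu)\,Y^\mu$ in characteristic $2$: nonzero multiplicities force $\mu\unrhd\la$, and the composition factors of $Y^\mu$ are known to be $D^\pi$ with $\pi\unrhd\mu$.

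Finally, $[S^\la:D^\la]=1$ for $\la\in\Par_2(n)$ follows from the construction: $D^\la$ appears once as the head of $S^\la$, and a second application of the submodule theorem rules out any additional copy in $\rad S^\la=S^\la\cap S^{\la\perp}$, as such a copy would lift to a submodule of $M^\la$ simultaneously contained in $S^{\la\perp}$ yet projecting onto $D^\la$, contradicting the dichotomy. The main obstacle is avoiding circular reasoning in the dominance induction, which is cleanly resolved by the Young module decomposition—a standard but separate piece of machinery that supplies an independent bound on the composition factors of $M^\la$ and hence of $S^\la$.
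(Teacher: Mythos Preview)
The paper does not prove this lemma; it is quoted from \cite[Corollary~12.2]{JamesBook} without argument, so there is no proof in the paper to compare against.

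Your proposed proof has a genuine circularity. In the inductive step you pass from $S^\la$ to $M^\la$ and invoke the Young module decomposition $M^\la=\bigoplus_{\nu\unrhd\la}(M^\la:Y^\nu)\,Y^\nu$, asserting that the composition factors of each $Y^\nu$ are $D^\pi$ with $\pi\unrhd\nu$. But the standard route to that last statement is via the Specht filtration of $Y^\nu$ together with the very triangularity you are trying to establish: $Y^\la$ has $S^\la$ itself as a Specht subquotient, so after the inductive hypothesis disposes of the factors $S^\tau$ with $\tau\rhd\la$, the composition factors of $Y^\la$ that remain unaccounted for are exactly those of $S^\la$. You recover only the tautology that the composition factors of $S^\la$ lie among themselves together with $\{D^\pi:\pi\rhd\la\}$; the Young module machinery does not break the circle, contrary to what your last sentence claims.

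The argument for $[S^\la:D^\la]=1$ also misfires. A second copy of $D^\la$ inside $\rad S^\la$ would give a submodule $U\subseteq S^{\la\perp}$ with $U\twoheadrightarrow D^\la$, but this sits squarely in the second alternative of the Submodule Theorem (``$U\supseteq S^\la$ or $U\subseteq S^{\la\perp}$'') and yields no contradiction; nothing in that dichotomy forbids a submodule of $S^{\la\perp}$ from mapping onto $D^\la$. James's own argument proceeds differently: the Submodule Theorem is applied to $M^\mu$ (so that every nonzero homomorphism out of $S^\mu$ has kernel contained in $S^\mu\cap S^{\mu\perp}$ and hence factors through $D^\mu$), and this is combined with his semistandard basis for $\Hom(S^\mu,M^\la)$. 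Young modules play no role.
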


\begin{lemma}\label{regs}\cite[Theorem 1.2]{ben}
Let $p=2$, $\la,\mu\in\Par_2(n)$. If $[S(\la,\eps):D^\mu]>0$ then $\mu\unrhd\dbl(\la)$. Further if $\dbl(\la)\in\Par_2(n)$ then
\[[S(\la,\eps):D^{\dbl(\la)}]=2^{(h_2(\la)-a(\la))/2},\]
where $h_2(\la)$ is the number of even parts of $\la$.
\end{lemma}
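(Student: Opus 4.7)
The strategy is to lift the computation from $\overline{\F_2}\s_n$ to characteristic $0$ via Brauer characters on $2$-regular classes. Since the central element $z\in\widetilde{\s}_n$ acts as $1$ on any reduction modulo $2$, the Brauer character of $S(\la,\eps)\otimes\overline{\F_2}$ evaluated at a $2$-regular class of $\s_n$ coincides with the value of the ordinary spin character $\chi(\la,\eps)$ at the unique odd-order lift in $\widetilde{\s}_n$. This places the decomposition number $[S(\la,\eps):D^\mu]$ inside the Grothendieck group $K_0(\overline{\F_2}\s_n)$, where the $\{[D^\mu]\}_{\mu\in\Par_2(n)}$ form a $\Z$-basis.

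The central step would be to establish an identity in $K_0(\overline{\F_2}\s_n)$ of the form
\[
[S(\la,\eps)\otimes\overline{\F_2}] \;=\; 2^{(h_2(\la)-a_\la)/2}\,[S^{\dbl(\la)}\otimes\overline{\F_2}] \;+\; \sum_{\al\rhd\dbl(\la)} e_\al\,[S^\al\otimes\overline{\F_2}]
\]
for suitable integers $e_\al$. By Brauer--Nesbitt this is equivalent to a character identity on $2$-regular elements. I would prove it by comparing the shifted Murnaghan--Nakayama rule for $\chi(\la,\eps)$ with the ordinary Murnaghan--Nakayama rule, using a combinatorial bijection that sends pairs of boxes of $\dbl(\la)$ coming from an even part of $\la$ to appropriate strip structures in standard tableaux. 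The leading coefficient $2^{(h_2(\la)-a_\la)/2}$ arises from a binary choice at each such pair, adjusted by the parity correction $a_\la$ that reflects the $\sgn$-invariance $S(\la,+)\cong S(\la,-)\otimes\sgn$ in the odd case.

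Granting this identity, both conclusions follow swiftly from \cref{reg}. Expanding each $[S^\al\otimes\overline{\F_2}]=\sum_\mu[S^\al:D^\mu]\,[D^\mu]$ and using $[S^\al:D^\mu]=0$ unless $\mu\unrhd\al$, we see that a given $[D^\mu]$ can appear only when $\mu\unrhd\al\unrhd\dbl(\la)$, proving the triangularity. When $\dbl(\la)\in\Par_2(n)$, the only $\al$ in the sum contributing to the coefficient of $[D^{\dbl(\la)}]$ is $\al=\dbl(\la)$ itself, because $[S^\al:D^{\dbl(\la)}]=0$ for $\al\rhd\dbl(\la)$; combined with $[S^{\dbl(\la)}:D^{\dbl(\la)}]=1$ from \cref{reg}, this yields the stated multiplicity $2^{(h_2(\la)-a_\la)/2}$.

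The main obstacle is the character identity: one has to track the signs introduced by the spin cocycle and the $2$-powers coming from doubling through a careful bijection between shifted tableaux on $\la$ and standard tableaux on the partitions $\al\unrhd\dbl(\la)$, restricted to odd cycle types. A natural alternative route, avoiding this combinatorics, would be an induction on $h(\la)$ using the branching rule for spin representations of $\widetilde{\s}_n\downarrow\widetilde{\s}_{n-1}$, reducing to the basic spin case $\la=(n)$, for which the dimension formula and an explicit character computation make the multiplicity $2^{(h_2((n))-a_{(n)})/2}$ verifiable by hand.
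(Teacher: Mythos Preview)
The paper does not prove this lemma at all: it is stated with the citation \cite[Theorem 1.2]{ben} and used as a black box throughout. So there is no ``paper's own proof'' to compare against; the intended argument is Benson's.

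As for your proposal itself, it is a reasonable outline of the shape such a proof must take, but it is not a proof. The entire content of the result is the identity
\[
[S(\la,\eps)\otimes\overline{\F_2}] = 2^{(h_2(\la)-a_\la)/2}\,[S^{\dbl(\la)}\otimes\overline{\F_2}] + \sum_{\al\rhd\dbl(\la)} e_\al\,[S^\al\otimes\overline{\F_2}],
\]
and you explicitly do not establish it: you describe it as ``the central step'' that ``would be'' proved by a bijection between shifted and ordinary tableaux restricted to odd cycle types, and then label it ``the main obstacle''. Everything after ``Granting this identity'' is routine and correct, but that deduction from \cref{reg} is the trivial part. The substance lies precisely in controlling the spin characters on $2$-regular classes well enough to extract the leading term with the exact power of $2$, and neither your Murnaghan--Nakayama bijection nor your inductive alternative via branching is carried out. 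In particular, the branching approach is more delicate than you suggest: restricting $S(\la,\eps)$ to $\widetilde{\s}_{n-1}$ produces several summands whose doubles need not be comparable in dominance order, so isolating the $\dbl(\la)$-coefficient through an induction on $h(\la)$ requires nontrivial bookkeeping that you have not supplied.
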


We need one more result, which compares composition factors of $S(\la)$ and $S(\la,\eps)$, where $\eps=0$ or $\pm$ depending on the partity of $\la$.

\begin{lemma}\label{L151123_2}
Let $p=2$ and $\la,\mu\in\Par_2(n)$. Then
\[[S(\la,\eps):D^\mu]=2^{-a(\la)}[S(\la):D^\mu].\]
\end{lemma}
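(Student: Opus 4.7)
I would split the argument according to the parity of $\la$. If $\la$ is even then $a_\la=0$, and by the convention set in \cref{s2} we have $S(\la)=S(\la,0)=S(\la,\eps)$, so the statement is immediate with $2^{-a_\la}=1$.

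The substantive case is when $\la$ is odd. Then $a_\la=1$ and $S(\la)=S(\la,+)\oplus S(\la,-)$, so the identity
\[[S(\la,\eps):D^\mu]=\tfrac{1}{2}[S(\la):D^\mu]\]
reduces to showing that
\[[S(\la,+):D^\mu]=[S(\la,-):D^\mu],\]
because then $[S(\la):D^\mu]=[S(\la,+):D^\mu]+[S(\la,-):D^\mu]=2[S(\la,\eps):D^\mu]$.

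To prove this equality the plan is to exploit the characteristic $0$ isomorphism $S(\la,+)\cong S(\la,-)\otimes\sgn$ recalled in \cref{s2}. Picking any $\widetilde{\s}_n$-invariant full lattice $L\subseteq S(\la,-)$, the twist $L\otimes\sgn$ is a full lattice inside $S(\la,+)$. In characteristic $2$ the sign representation coincides with the trivial representation, so the $2$-modular reductions $\overline{L\otimes\sgn}$ and $\overline{L}$ are isomorphic as $\overline{\F_2}\widetilde{\s}_n$-modules. Since decomposition numbers can be read off any such reduction, this immediately yields $[S(\la,+):D^\mu]=[S(\la,-):D^\mu]$.

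There is no real obstacle here; the only point needing care is the identification $\overline{\sgn}\cong\overline{\1}$ in characteristic $2$, which is precisely what forces the two genuinely distinct complex spin representations $S(\la,\pm)$ to have matching reductions modulo $2$.
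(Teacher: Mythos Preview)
Your proof is correct and follows essentially the same approach as the paper: split according to the parity of $\la$, and in the odd case use $S(\la,+)\cong S(\la,-)\otimes\sgn$ together with $\sgn\equiv\1$ in characteristic $2$ to conclude that $S(\la,+)$ and $S(\la,-)$ have identical $2$-modular reductions. The paper's version is simply more terse, omitting the explicit lattice argument you spell out.
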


\begin{proof}
If $a(\la)=0$ then $S(\la)=S(\la,0)$. If instead $a(\la)=1$ then $S(\la)=S(\la,+)\oplus S(\la,-)$. Further in this case $S(\la,+)\cong S(\la,-)\otimes\sgn$, in particular $S(\la,+)$ and $S(\la,-)$ reduce in the same way in characteristic 2.
\end{proof}

\section{Linear combinations for basic spin modules}\label{s4}

Proofs of \cref{t1,t2} rely on certain linear combinations of reductions modulo 2 of the modules $S((n))$ (basic spin modules) in terms of the permutation modules $M^{(n-h,h)}$ (such linear combinations exist only in characteristic $2$). We collect these results here. In this section, as well in other places in the paper, when writing modules as linear combinations of other modules, we will work in the Grothendieck group.

We start with a result comparing $S((n))$ and $D^{\dbl(n)}$. 

\begin{lemma}\label{bs}
Let $p=2$ and $n\geq 1$. Then
\[[S((n))]=2^{a(n)}[D^{\dbl(n)}].\]
\end{lemma}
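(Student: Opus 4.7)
By Lemma \ref{L151123_2} applied to $\la=(n)$, the asserted identity $[S((n))]=2^{a_{(n)}}[D^{\dbl((n))}]$ is equivalent to the Grothendieck-group identity $[S((n),\eps)]=[D^{\dbl((n))}]$, so my plan is to prove this reduced form; the original identity will then follow by multiplying through by $2^{a_{(n)}}$ and using Lemma \ref{L151123_2} in reverse.

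Lemma \ref{regs} already constrains the situation substantially. Since $\dbl((n))$ has distinct parts and is therefore $2$-regular, any composition factor $D^\mu$ of $S((n),\eps)$ satisfies $\mu\unrhd\dbl((n))$, and the multiplicity of $D^{\dbl((n))}$ itself equals $2^{(h_2((n))-a_{(n)})/2}$. Evaluating: when $n$ is odd both $h_2((n))$ and $a_{(n)}$ are $0$, and when $n$ is even both are $1$; in either case the exponent vanishes, so the multiplicity of $D^{\dbl((n))}$ in $S((n),\eps)$ is exactly $1$.

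To finish, it suffices to exclude every other potential composition factor $D^\mu$ with $\mu\rhd\dbl((n))$. My plan for this is a dimension count. The basic spin module $S((n),\eps)$ has the classical dimension $2^{\lfloor (n-1)/2\rfloor}$ coming from Schur's construction, and this dimension is preserved on reduction modulo $2$. If one can verify independently that $\dim D^{\dbl((n))}=2^{\lfloor (n-1)/2\rfloor}$, then the multiplicity-$1$ contribution of $D^{\dbl((n))}$ exhausts the full dimension of $S((n),\eps)$, leaving no room for any further $D^\mu$.

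The main obstacle is thus the dimension computation $\dim D^{\dbl((n))}=2^{\lfloor (n-1)/2\rfloor}$. Since $\dbl((n))$ has at most two parts, this can be approached either by specialising known formulae for $D^{(a,b)}$ in characteristic~$2$, or by a short induction on $n$ using the modular branching rules for $\s_n$ and verifying the small base cases directly; alternatively one may appeal to results in the literature on basic spin modules in characteristic~$2$ (in the spirit of the Wales reference invoked later in the paper) that identify the reduction of $S((n),\eps)$ outright. Once the dimension is in hand, the argument closes as above.
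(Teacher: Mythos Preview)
Your reduction via Lemma~\ref{L151123_2} and the application of Lemma~\ref{regs} to pin down the multiplicity of $D^{\dbl((n))}$ as $1$ are both fine. The dimension-count strategy is also sound in principle: if $\dim D^{\dbl((n))}=2^{\lfloor(n-1)/2\rfloor}$ then you are done.

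The gap is that you do not actually establish this dimension. You list three possible routes---two-part formulae, branching induction, or the Wales reference---but carry none of them out, so as written the argument is a plan rather than a proof. Note also a near-circularity hazard: later in the paper (proof of Theorem~\ref{L111223_3}) the identity $\dim D^{\dbl(m)}=2^{\lfloor(m-1)/2\rfloor}$ is \emph{derived from} the present lemma together with \cite[Theorem 3.3]{s}, so you cannot simply borrow that computation.

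The paper's own proof is more direct and avoids the dimension issue entirely. For $n\ge 4$ it cites \cite[Table~III]{Wales}, which already identifies the reduction of the basic spin module outright, and combines this with Lemma~\ref{regs}. For $n\le 3$ it observes that there is no other $2$-regular $\mu\rhd\dbl((n))$ in the same $2$-block, so Lemma~\ref{regs} alone suffices. Your third suggested route (appeal to Wales) is precisely this; if you take it, the preliminary dimension-count framing becomes superfluous.
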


\begin{proof}
For $n\geq 4$ this holds by \cite[Table III]{Wales} and \cref{regs}. For $n\leq 3$ use \cref{regs} together with the fact that there are no further module in the same block in characteristic 2.
\end{proof}

In view of Lemma \ref{bs}, we call also the modules $D^{\dbl(n)}$ basic spin modules. We next write these modules in terms of the Specht modules $S^{(n-h,h)}$ indexed by partitions with at most 2 parts.

\begin{lemma}\label{bss}
Let $p=2$ and $n\geq 1$ and set $S^{(n-h,h)}:=0$ if $h<0$. Then
\begin{enumerate}
\item
If $n$ is even then
\[[D^{(n/2+1,n/2-1)}]=\sum_{i\geq 0}([S^{(n/2+4i+1,n/2-4i-1)}]-[S^{(n/2+4i+2,n/2-4i-2)}]).\]

\item
If $n$ is odd then
\[[D^{((n+1)/2,(n-1)/2)}]=\sum_{i\geq 0}([S^{((n+1)/2+4i,(n-1)/2-4i)}]-[S^{((n+5)/2+4i,(n-5)/2-4i)}]).\]
\end{enumerate}
\end{lemma}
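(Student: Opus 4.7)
The plan is to work in the Grothendieck group $K_0(\overline{\F_2}\s_n)$ and to express $[D^{\dbl((n))}]$ as an explicit $\Z$-linear combination of the two-row Specht classes $[S^{(n-h,h)}]$, using the classical description of the decomposition numbers for two-row Specht modules in characteristic $2$. The two cases (i) and (ii) are then identified as matching the second part of $\dbl((n))$, namely $n/2-1$ when $n$ is even and $(n-1)/2$ when $n$ is odd.

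First, I would recall James's formula: for $0\le k\le h\le n-k$ with $(n-k,k)$ being $2$-regular, the multiplicity $d_{h,k}:=[S^{(n-h,h)}:D^{(n-k,k)}]$ lies in $\{0,1\}$ and is governed by a Kummer/Lucas-type condition on the base-$2$ expansions of $h-k$ and $n-h-k+1$. Since the matrix $(d_{h,k})$ is unitriangular in the partial order $k\le h$, it is invertible over $\Z$, so there is a unique expression $[D^{(n-k,k)}]=\sum_h e_{h,k}\,[S^{(n-h,h)}]$ with $e_{h,k}\in\Z$, and the lemma amounts to reading off these coefficients for the specific choice $(n-k,k)=\dbl((n))$.

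Second, I would verify by direct computation that the right-hand side of (i) or (ii), when expanded into simple modules via $d_{h,k}$, has coefficient $\delta_{k,k_0}$ on every $[D^{(n-k,k)}]$, where $k_0$ is the second part of $\dbl((n))$. The key structural point is that consecutive pairs $[S^{(n-h,h)}]-[S^{(n-h+1,h-1)}]$ appearing at the positions $h = n/2-1-4i$ (even case) or $h=(n-1)/2-4i$ (odd case) correspond, under the binary-carry description of $d_{h,k}$, to a telescoping cancellation that annihilates every contribution to $[D^{(n-k,k)}]$ with $k\ne k_0$, while leaving behind exactly one copy of $[D^{\dbl((n))}]$ coming from the leading term $[S^{\dbl((n))}]$ (by \cref{reg}).

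The main obstacle will be the combinatorial check that the alternating step-of-$4$ pattern $j\in\{1,2,5,6,9,10,\ldots\}$ in the exponents matches precisely the jumps of the binary-carry condition on $d_{h,k}$; this is where the two cases of the lemma naturally split, since the parity of $n$ controls the relevant carry pattern. As an alternative route that sidesteps this combinatorics, one could combine \cref{bs}, which gives $[S((n))]=2[D^{\dbl((n))}]$, with the classical explicit Schur formula for the basic spin character of $\widetilde{\s}_n$ on conjugacy classes of odd-order elements, and then match Brauer characters of both sides on $2$-regular classes using the Frobenius character formula for two-row Specht modules; however, the direct decomposition-matrix approach above integrates more cleanly with the rest of the paper, since the expansions in \cref{bss} are what will subsequently feed into \cref{lr,st}.
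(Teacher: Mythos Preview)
Your plan is the same as the paper's: expand the proposed alternating sum of two-row Specht classes against James's formula $[S^{(n-h,h)}:D^{(n-k,k)}]=g_{n-2k+1,h-k}$ (with $g_{a,b}$ the Lucas-type indicator) and check that only $D^{\dbl((n))}$ survives. However, as written this is only an outline: the ``main obstacle'' you flag --- the combinatorial verification that the step-of-$4$ alternating pattern produces the required telescoping --- is precisely the content of the lemma, and the paper carries it out explicitly by a case analysis on the residue of $m-k$ modulo $4$ (writing $n=2m$), showing that for $m-k\equiv 0,2,3\pmod 4$ the paired terms cancel identically while for $m-k\equiv 1\pmod 4$ one is left with a genuinely telescoping sum that collapses to $g_{0,0}=1$ on the target simple. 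You should write this out rather than defer it.

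One point where your plan diverges from the paper: you propose to handle the even and odd cases in parallel by the same direct analysis, whereas the paper only does the even case by hand and then deduces the odd case by restricting from $\s_{2m}$ to $\s_{2m-1}$, using $D^{(m+1,m-1)}\da_{\s_{2m-1}}\cong D^{(m,m-1)}$ together with the branching rule $[S^{(2m-h,h)}\da_{\s_{2m-1}}]=[S^{(2m-h,h-1)}]+[S^{(2m-h-1,h)}]$. This halves the work and is worth adopting. Your alternative route via Brauer characters of the basic spin module would also succeed in principle, but is less self-contained here and, as you note, less well integrated with the subsequent use of the lemma.
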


\begin{proof}
Set also $D^{(n-k,k)}:=0$ if $k<0$. Further for $a,b\geq 0$ with $a=\sum_{i\geq 0}a_i2^i$ and $b=\sum_{i\geq 0}b_i2^i$ their 2-adic decompositions, let $g_{a,b}=1$ if $b_i\in\{0,a_i\}$ for each $i$ and let $g_{a,b}=0$ otherwise. If $a<0$ or $b<0$ define $g_{a,b}:=0$.

By \cite[Theorem 24.15]{JamesBook}, for each $0\leq h\leq\lfloor n/2\rfloor$ and $0\leq k\leq\lfloor (n-1)/2\rfloor$,
\[[S^{(n-h,h)}]=\sum_{k}g_{n-2k+1,h-k}[D^{(n-k,k)}]\]
(note that since $n-2k+1>2(h-k)\geq 0$ there always exists $i$ with $n-2k+1\geq 2^i>h-k$).

Assume first that $n=2m$ is even. Then for every $i\geq 0$
\begin{align*}
&\sum_{i\geq 0}[S^{(m+4i+1,m-4i-1)}]-[S^{(m+4i+2,m-4i-2)}]\\
&=\sum_{k\leq m-1}\sum_{i\geq 0}(g_{2m-2k+1,m-4i-k-1}-g_{2m-2k+1,m-4i-k-2})[D^{(2m-k,k)}]
\end{align*}
(as $g_{n-2k+1,h-k}=0$ if $k>h$ and by definition $S^{(n-h,h)}=0$ or $D^{(n-k,k)}=0$ if $h<0$ or $k<0$).

If $m-k=2a$ then
\begin{align*}
&g_{2m-2k+1,m-4i-k-1}-g_{2m-2k+1,m-4i-k-2}\\
&=g_{4a+1,2a-4i-1}-g_{4a+1,2a-4i-2}\\
&=g_{2\cdot2a+1,2(a-2i-1)+1}-g_{2\cdot2a+1,2(a-2i-1)}\\
&=g_{2a,a-2i-1}-g_{2a,a-2i-1}=0.
\end{align*}
If $m-k=4a+3$ then
\begin{align*}
&g_{2m-2k+1,m-4i-k-1}-g_{2m-2k+1,m-4i-k-2}\\
&=g_{8a+7,4a-4i+2}-g_{8a+7,4a-4i+1}\\
&=g_{4(2a+1)+2+1,4(a-i)+2}-g_{4(2a+1)+2+1,4(a-i)+1}\\
&=g_{2a+1,a-i}-g_{2a+1,a-i}=0.
\end{align*}
If $m-k=4a+1$ then
\begin{align*}
&g_{2m-2k+1,m-4i-k-1}-g_{2m-2k+1,m-4i-k-2}\\
&=g_{8a+3,4a-4i}-g_{8a+3,4a-4i-1}\\
&=g_{4\cdot2a+2+1,4(a-i)}-g_{4\cdot2a+2+1,4(a-i-1)+2+1}\\
&=g_{2a,a-i}-g_{2a,a-i-1}.
\end{align*}
So
\begin{align*}
&\sum_{i\geq 0}[S^{(m+4i+1,m-4i-1)}]-[S^{(m+4i+2,m-4i-2)}]\\
&=\sum_a\sum_{i\geq 0}(g_{2a,a-i}-g_{2a,a-i-1})[D^{(m+4a+1,m-4a-1)}]\\
&=\sum_ag_{2a,a}[D^{(m+4a+1,m-4a-1)}].
\end{align*}
If $a\not=0$ then $g_{2a,a}=0$. On the other hand $g_{0,0}=1$, so that (i) holds.

Assume now that $n=2m-1$ is odd. In view of \cite[Theorem 11.2.7]{KBook} we have that $D^{(m+1,m-1)}\da_{\s_{2m-1}}\cong D^{(m,m-1)}$ and by \cite[Theorem 9.2]{JamesBook} that $[S^{(2m-h,h)}\da_{\s_{2m-1}}]=[S^{(2m-h,h-1)}]+[S^{(2m-h-1,h)}]$ for each $h<m$. Thus (ii) follows from (i).
\end{proof}

The next two results now allow to write basic spin modules in terms of the permutation modules $M^{(n-h,h)}$.

\begin{thm}\label{bsm}
Let $p=2$ and $n\geq 1$. Then
\begin{enumerate}
\item
If $n$ is even then $[D^{(n/2+1,n/2-1)}]=\sum_{h=0}^{n/2-1}r_{h}[M^{(n/2+1+h,n/2-1-h)}]$ with $r_{h}=1$ if $h$ is even, $r_{h}=-2$ if $h\equiv 1\Md{4}$ and $r_{h}=0$ if $h\equiv 3\Md{4}$.

\item
If $n$ is odd then $[D^{((n+1)/2,(n-1)/2)}]=\sum_{h=0}^{(n-1)/2}s_{h}[M^{((n+1)/2+h,(n-1)/2-h)}]$ with $s_{h}=1$ if $h\equiv 0\text{ or }3\Md{4}$ and $s_{h}=-1$ if $h\equiv 1\text{ or }2\Md{4}$.
\end{enumerate}
\end{thm}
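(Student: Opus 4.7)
The plan is to combine \cref{bss} with the characteristic-free Young rule for two-row permutation modules and then invert a triangular relation in the Grothendieck group. Young's rule (which holds in any characteristic as a Specht filtration statement) gives
\[[M^{(n-h,h)}]=\sum_{k=0}^{h}[S^{(n-k,k)}],\]
and this upper-triangular system inverts to
\[[S^{(n)}]=[M^{(n)}],\qquad [S^{(n-h,h)}]=[M^{(n-h,h)}]-[M^{(n-h+1,h-1)}]\quad (h\geq 1).\]
Since \cref{bss} already expresses $[D^{\dbl(n)}]$ as an alternating sum of classes $[S^{(n-k,k)}]$, substituting this inversion and regrouping by the value of the first part will yield the claimed expansion in terms of permutation modules.

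For part (i) with $n=2m$, each index $(m+4i+1,m-4i-1)$ in the Specht expansion contributes $+1$ to the coefficient of $[M^{(m+4i+1,m-4i-1)}]$ and $-1$ to $[M^{(m+4i+2,m-4i-2)}]$, while each index $(m+4i+2,m-4i-2)$ in that expansion contributes $-1$ to $[M^{(m+4i+2,m-4i-2)}]$ and $+1$ to $[M^{(m+4i+3,m-4i-3)}]$. Grouping by $h\pmod 4$ in $[M^{(m+1+h,m-1-h)}]$ gives $+1$ at $h\equiv 0$, $-1-1=-2$ at $h\equiv 1$, $+1$ at $h\equiv 2$, and $0$ at $h\equiv 3$, which is exactly the sequence $r_h$. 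Part (ii) is analogous: the contributions of $\pm[S^{(m+4i,m-1-4i)}]$ and $\mp[S^{(m+4i+2,m-3-4i)}]$ produce $+1,-1,-1,+1$ according as $h\equiv 0,1,2,3\pmod 4$, matching $s_h$.

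As an alternative to going via \cref{bss}, one could also deduce (ii) from (i) by restriction $\Res^{\s_{2m}}_{\s_{2m-1}}$, using the branching rules $D^{(m+1,m-1)}\da\cong D^{(m,m-1)}$ and $[M^{(m+1+h,m-1-h)}\da]=[M^{(m+h,m-1-h)}]+[M^{(m+1+h,m-2-h)}]$, which telescopes the even-$n$ coefficients $r_h$ into the odd-$n$ coefficients $s_h$. This is essentially the same mechanism used in the proof of \cref{bss}(ii) and provides a convenient cross-check.

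There is no serious conceptual obstacle: the argument is almost purely a bookkeeping telescoping once the Young-rule inversion is in place. The main risk is an indexing slip when translating between the parametrisation $(n-h,h)$ used for the permutation modules and the parametrisation by $i$ used in \cref{bss}, and when keeping track of the boundary cases (small $h$, and the last nonzero term near $h=\lfloor n/2\rfloor-1$), so it is worth verifying the four residue classes $h\equiv 0,1,2,3\pmod 4$ explicitly at the end.
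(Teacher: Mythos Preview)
Your proposal is correct and takes essentially the same approach as the paper: the paper's proof simply says the result follows from \cref{bss} together with Young's rule $[M^{(n-h,h)}]=\sum_{k\leq h}[S^{(n-k,k)}]$, and you have written out the inversion and the telescoping bookkeeping explicitly. Your residue-by-residue check is exactly the computation the paper leaves implicit.
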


\begin{proof}
This holds by \cref{bss} and the fact that $[M^{(n-h,h)}]=\sum_{k\leq h}[S^{(n-k,k)}]$ for $0\leq h\leq\lfloor n/2\rfloor$ (see for example \cite[Example 17.17]{JamesBook}).
%
\end{proof}

\begin{thm}\label{L111223_3}
Let $p=2$, $1\leq\ell<n$ and $t_{n,h}$ with $[D^{\dbl(n)}]=\sum_{h=0}^nt_{n,h}[M^{(n-h,h)}]$.
Then
\begin{align*}
&2^{1-(1-a(\ell))(1-a(n-\ell))}[D^{\dbl(\ell)}\boxtimes D^{\dbl(n-\ell)}].\\
&=\sum_{h=0}^{n}\sum_{j=\max\{0,h+\ell-n\}}^{\min\{h,\ell\}} t_{n,h}
[M^{(\ell-j,j)}\otimes M^{(n-\ell-h+j,h-j)}].
\end{align*}
\end{thm}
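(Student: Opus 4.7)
The plan is to start from Theorem~\ref{bsm}, which writes
\[
[D^{\dbl(n)}] = \sum_{h=0}^n t_{n,h}\,[M^{(n-h,h)}],
\]
and restrict both sides from $\s_n$ to the Young subgroup $\s_\ell \times \s_{n-\ell}$. Each two-part permutation module satisfies the standard Mackey-type decomposition
\[
M^{(n-h,h)} \da_{\s_\ell \times \s_{n-\ell}} \;\cong\; \bigoplus_{j=\max\{0,\,h+\ell-n\}}^{\min\{h,\,\ell\}} M^{(\ell-j,\,j)} \boxtimes M^{(n-\ell-h+j,\,h-j)},
\]
obtained by decomposing the set of $h$-element subsets of $\{1,\dots,n\}$ into $\s_\ell \times \s_{n-\ell}$-orbits indexed by the size $j$ of the intersection with $\{1,\dots,\ell\}$. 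Substituting produces precisely the right-hand side of the stated identity.

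Hence the theorem is equivalent to the branching relation
\[
[D^{\dbl(n)}\da_{\s_\ell \times \s_{n-\ell}}] \;=\; 2^{1-(1-a_{(\ell)})(1-a_{(n-\ell)})}\,[D^{\dbl(\ell)} \boxtimes D^{\dbl(n-\ell)}].
\]
Using Lemma~\ref{bs} to replace each $[D^{\dbl(k)}]$ by $2^{-a_{(k)}}[S((k))]$, this in turn reduces to the classical branching identity
\[
[S((n))\da_{\s_\ell \times \s_{n-\ell}}] \;=\; 2^{\,a_{(n)} - a_{(\ell)} a_{(n-\ell)}}\,[S((\ell)) \boxtimes S((n-\ell))]
\]
for basic spin modules; the power of $2$ here records whether each $S((k))$ for $k\in\{\ell,n-\ell,n\}$ is a single irreducible or a direct sum of two associate ones, and agrees with the dimension check $\dim D^{\dbl(n)}=2^{\lfloor(n-1)/2\rfloor}$ against $\dim(D^{\dbl(\ell)}\boxtimes D^{\dbl(n-\ell)})=2^{\lfloor(\ell-1)/2\rfloor+\lfloor(n-\ell-1)/2\rfloor}$, whose ratio is exactly $2^{1-(1-a_{(\ell)})(1-a_{(n-\ell)})}$.

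The main obstacle is the spin branching identity itself. I would prove it by first showing that $D^{\dbl(\ell)}\boxtimes D^{\dbl(n-\ell)}$ is the only composition factor of $D^{\dbl(n)}\da_{\s_\ell \times \s_{n-\ell}}$---this follows from the classical realization of the basic spin module as a Clifford-type module, whose Clifford algebra tensor-factors along the subgroup decomposition, so that the restriction is built from a single basic-spin component on each factor---and then pinning down the multiplicity by the dimension calculation above. An alternative, purely combinatorial route is to expand $[D^{\dbl(\ell)} \boxtimes D^{\dbl(n-\ell)}]$ via Theorem~\ref{bsm} applied to each tensor factor and to verify the resulting arithmetic identity $2^{1-(1-a_{(\ell)})(1-a_{(n-\ell)})}\,t_{\ell,a}\,t_{n-\ell,b}=t_{n,a+b}$ case-by-case from the explicit values of $r_h$ and $s_h$; this is somewhat more laborious but avoids any appeal to the Clifford-module machinery.
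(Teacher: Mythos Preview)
Your main line of argument is essentially the same as the paper's: restrict the expansion of $[D^{\dbl(n)}]$ in permutation modules to $\s_\ell\times\s_{n-\ell}$, apply Mackey to each $M^{(n-h,h)}$, and separately identify $[D^{\dbl(n)}\da_{\s_\ell\times\s_{n-\ell}}]$ as a multiple of $[D^{\dbl(\ell)}\boxtimes D^{\dbl(n-\ell)}]$, with the multiplicity read off from dimensions. The only cosmetic difference is that the paper cites Stembridge's results directly --- \cite[Theorem~8.1]{s} for the fact that $S((n))$ restricted to $\widetilde\s_m$ is a multiple of $S((m))$ (hence every composition factor of $D^{\dbl(n)}\da_{\s_\ell}$ is $D^{\dbl(\ell)}$, and similarly for $n-\ell$), and \cite[Theorem~3.3]{s} together with \cref{bs} for $\dim D^{\dbl(m)}=2^{\lfloor(m-1)/2\rfloor}$ --- whereas you gesture at the underlying Clifford-algebra construction. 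Either justification is fine.

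Your proposed \emph{alternative} combinatorial route, however, does not work as stated. The identity
\[
2^{1-(1-a_{(\ell)})(1-a_{(n-\ell)})}\,t_{\ell,a}\,t_{n-\ell,b}=t_{n,a+b}
\]
is not even well-posed, because the coefficients $t_{k,h}$ are not uniquely determined by $[D^{\dbl(k)}]=\sum_h t_{k,h}[M^{(k-h,h)}]$ (one has $M^{(k-h,h)}\cong M^{(h,k-h)}$). And for the natural choice coming from \cref{bsm} the identity is simply false: with $n=8$, $\ell=n-\ell=4$ (so the power of $2$ is $1$) and $a=b=0$, the left-hand coefficient of $[M^{(4)}\boxtimes M^{(4)}]$ is $r_1^2=4$, while the right-hand coefficient is $r_3=0$. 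The point is that the modules $M^{(\ell-j,j)}\boxtimes M^{(n-\ell-k,k)}$ are not linearly independent in the Grothendieck group, so matching coefficients term-by-term is illegitimate; one really needs the branching input for $D^{\dbl(n)}$ to make this comparison, which brings you back to the main route.
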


\begin{proof}
By \cref{bs}, $[S((n))]=2^{a(n)}[D^{\dbl(n)}]$ and by \cite[Theorem 8.1]{s}, $[S((n))\da_{\widetilde{\s}_{m}}]=c[S((m))]$ for some $c>0$. So any composition factor of $D^{\dbl(n)}\da_{\s_{\ell}}$ resp. $D^{\dbl(n)}\da_{\s_{n-\ell}}$ is of the form $D^{\dbl(\ell)}$ resp. $D^{\dbl(n-\ell)}$ and then
\begin{align*}
&[D^{\dbl(n)}\da_{\s_{\ell,n-\ell}}]\\
&=d[D^{\dbl(\ell)}\boxtimes D^{\dbl(n-\ell)}]
\end{align*}
for some $d$. By \cref{bs} and \cite[Theorem 3.3]{s}, $D^{\dbl(m)}$ has dimension $2^{\lfloor(m-1)/2\rfloor}$ for every $m\geq 1$. Comparing dimensions it follows that $d=2^{1-(1-a(\ell))(1-a(n-\ell))}$.

As
\[[M^{(n-a,a)}\da_{\s_{\ell,n-\ell}}]=\sum_{j=\max\{0,a+\ell-n\}}^{\min\{a,\ell\}}[M^{(\ell-j,j)}\boxtimes M^{(n-\ell+j-a,a-j)}]\]
by Mackey's induction-restriction theorem, the theorem follows.
\end{proof}

\section{Littlewood-Richardson coefficients}\label{lr}

A tableau $t$ is a map from a set of nodes to the ordered set $\{1<2<\ldots\}$. The corresponding set of nodes is called the shape of the tableau $t$. For $t$ any tableau we also define its content to be the composition whose $i$-th part is the number of nodes of content $i$ in $t$. The shape of a tableau is the underlying set of nodes.

This notion of content is consistent with notation from \cite{s} and other literature when working with tableaux. It is however different from notions of content used to consider blocks of representations of (double covers of) symmetric groups in positive characteristic.

For $t$ a tableau let $w(t)$ be the word obtained by concatenating the contents of nodes of $t$, reading row by row, starting from the top and within a row from the right. For any word $w=w_1,\ldots,w_\ell$, any $i\geq 1$ and $0\leq j\leq \ell$ define $m_i(j)$ to be the number of $i$ in $w_1\ldots w_j$.

We say that a tableau is semistandard if entries increase along columns and weakly increase along rows. We say that a tableau $t$ satisfies the lattice condition if $m_i(j)\geq m_{i+1}(j)$ for every $i\geq 1$ and $0\leq j\leq\ell$ (with $\ell$ the number of nodes in the tableau).

If $t$ is semistandard it follows from the definition that $t$ satisfies the lattice condition if and only if for every $i,k\geq 1$, if $t$ has at least $k$ nodes of content $i+1$ then $t$ has at least $k$ nodes of content $i$ and the $k$-th $i$ appears before the $k$-th $i+1$, counting nodes according to their position in $w(t)$.

Let $\al,\be,\ga$ be partitions. If $\al\subseteq\ga$ define the Littlewood-Richardson coefficient $c_{\al,\be}^\ga$ to be the number of semistandard tableau of shape $\ga\setminus\al$ and content $\be$ which satisfy the lattice condition. If $\al\not\subseteq\ga$ define instead $c_{\al,\be}^\ga:=0$.

The next result allows to describe tensor products of Specht modules and permutation modules in terms of Littlewood-Richardson coefficients:

\begin{lemma}\label{L101123_3}
Let $\al,\be\in\Par(n)$ and $0\leq i\leq\lfloor n/2\rfloor$. Then
\[[S^\al\otimes M^{(n-i,i)}:S^\be]=\sum_{\ga\in\Par(n-i),\de\in\Par(i)}c_{\ga,\de}^\al c_{\ga,\de}^\be.\]
\end{lemma}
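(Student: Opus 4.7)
The plan is to use the projection formula together with the classical Littlewood--Richardson rule in characteristic $0$. Since $M^{(n-i,i)}=\mathbf{1}\ua_{\s_{n-i}\times\s_i}^{\s_n}$, a standard Frobenius-type identity (push-pull) gives
\[
S^\al\otimes M^{(n-i,i)}\;\cong\;\Ind_{\s_{n-i}\times\s_i}^{\s_n}\bigl(\Res_{\s_{n-i}\times\s_i}^{\s_n}S^\al\bigr),
\]
so the multiplicity we want equals $[\Ind_{\s_{n-i}\times\s_i}^{\s_n}(\Res_{\s_{n-i}\times\s_i}^{\s_n}S^\al):S^\be]$.

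Next I would decompose both ends using the Littlewood--Richardson rule. The induction version states that for $\ga\in\Par(n-i)$ and $\de\in\Par(i)$,
\[
\Ind_{\s_{n-i}\times\s_i}^{\s_n}(S^\ga\boxtimes S^\de)\;\cong\;\bigoplus_{\be\in\Par(n)}c_{\ga,\de}^{\be}\,S^\be.
\]
By Frobenius reciprocity (and semisimplicity in characteristic $0$) the restriction version follows:
\[
\Res_{\s_{n-i}\times\s_i}^{\s_n}S^\al\;\cong\;\bigoplus_{\ga\in\Par(n-i),\de\in\Par(i)}c_{\ga,\de}^{\al}\,(S^\ga\boxtimes S^\de).
\]

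Substituting the restriction decomposition into $\Ind(\Res S^\al)$ and then applying the induction decomposition termwise yields
\[
[S^\al\otimes M^{(n-i,i)}:S^\be]\;=\;\sum_{\ga,\de}c_{\ga,\de}^{\al}\,c_{\ga,\de}^{\be},
\]
which is the claimed formula. There is no real obstacle here: the argument is essentially a one-line consequence of the projection formula and the Littlewood--Richardson rule, and all constructions are over $\C$ so semisimplicity is available. The only thing to be a little careful about is that the sum ranges over $\ga\vdash n-i$ and $\de\vdash i$ (coefficients $c_{\ga,\de}^\al$ and $c_{\ga,\de}^\be$ automatically vanish unless $|\ga|+|\de|$ matches $|\al|=|\be|=n$), which matches the indexing in the statement.
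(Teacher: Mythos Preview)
Your proof is correct and follows essentially the same approach as the paper's, which simply cites the Littlewood--Richardson rule (James \S16.1, \S16.4) together with Frobenius reciprocity in characteristic $0$; your projection formula $S^\al\otimes M^{(n-i,i)}\cong\Ind(\Res S^\al)$ is exactly the Frobenius reciprocity step made explicit. There is nothing to add.
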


\begin{proof}
This follows by  \cite[16.1 and 16.4]{JamesBook} and by Frobenius reciprocity in characteristic 0.
\end{proof}

\begin{lemma}\label{L111223_2}\cite[16.5]{JamesBook}
For any partitions $\al,\be,\ga$ with $|\al|+|\be|=|\ga|$ we have that $c_{\al,\be}^\ga=c_{\be,\al}^\ga=c_{\al',\be'}^{\ga'}$.
\end{lemma}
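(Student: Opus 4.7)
Both identities become transparent after interpreting $c_{\al,\be}^\ga$ representation-theoretically. Writing $a:=|\al|$ and $b:=|\be|$, the Littlewood--Richardson rule (\cite[Chapter 16]{JamesBook}, essentially the content underlying \cref{L101123_3} via Frobenius reciprocity and the identification $M^{(n-i,i)}=\Ind_{\s_{n-i}\times\s_i}^{\s_n}\mathbf{1}$) gives
\[c_{\al,\be}^\ga=\bigl[\Ind_{\s_a\times\s_b}^{\s_{a+b}}(S^\al\boxtimes S^\be):S^\ga\bigr]\]
in characteristic $0$. I would take this as the starting point.

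The symmetry $c_{\al,\be}^\ga=c_{\be,\al}^\ga$ then follows from the fact that any $w\in\s_{a+b}$ swapping the blocks $\{1,\dots,a\}$ and $\{a+1,\dots,a+b\}$ conjugates $\s_a\times\s_b$ into $\s_b\times\s_a$ and identifies $S^\al\boxtimes S^\be$ with $S^\be\boxtimes S^\al$. Since conjugation by $w$ is an inner automorphism of $\s_{a+b}$, it preserves induced modules, and comparing multiplicities of $S^\ga$ yields the identity.

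For the conjugate symmetry $c_{\al,\be}^\ga=c_{\al',\be'}^{\ga'}$, I would invoke two standard characteristic-$0$ facts: $S^\la\otimes\sgn\cong S^{\la'}$, and $\Res^{\s_{a+b}}_{\s_a\times\s_b}\sgn\cong\sgn\boxtimes\sgn$. Combined with the tensor identity $\Ind(V\otimes W\!\!\da)\cong\Ind(V)\otimes W$ for induction, these give
\[\Ind_{\s_a\times\s_b}^{\s_{a+b}}(S^{\al'}\boxtimes S^{\be'})\cong\Ind_{\s_a\times\s_b}^{\s_{a+b}}(S^\al\boxtimes S^\be)\otimes\sgn,\]
and extracting the multiplicity of $S^{\ga'}\cong S^\ga\otimes\sgn$ on both sides finishes the proof.

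The main (and essentially only) step requiring more than formal manipulation is the initial translation from the combinatorial definition of $c_{\al,\be}^\ga$ to the induced-module multiplicity; once that is in hand, both symmetries are routine consequences of properties of induction and the sign representation. A purely combinatorial argument is also possible but considerably harder: the commutativity $c_{\al,\be}^\ga=c_{\be,\al}^\ga$ can be proved via jeu-de-taquin or RSK, and the conjugate symmetry via a Sch\"utzenberger-type involution, neither of which is as clean as the representation-theoretic route above.
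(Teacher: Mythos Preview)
Your argument is correct and standard. The paper does not supply its own proof of this lemma: it is stated with a bare citation to \cite[16.5]{JamesBook} and no further justification. So there is nothing to compare at the level of proof strategy; you have simply filled in a proof where the paper chose to quote the literature. Your representation-theoretic route via the induced-module interpretation of $c_{\al,\be}^\ga$, together with inner conjugation for the $\al\leftrightarrow\be$ symmetry and the identity $S^{\la'}\cong S^\la\otimes\sgn$ combined with the projection formula for the conjugate symmetry, is exactly the kind of argument that underlies the cited reference.
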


The next lemmas give conditions for (products of) Littlewood-Richardson coefficients to be non-zero. At least \cref{L101123_4} is already known. As we are unaware where a proof may be found prove the result again here.

\begin{lemma}\label{L101123_4}
Let $\al,\be,\ga$ be partitions with $c_{\al,\be}^\ga\geq 1$. Then $\al,\be$ and $\al\sqcup\be\unlhd\ga\unlhd\al+\be$.
\end{lemma}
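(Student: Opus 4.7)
My plan is to treat the four inequalities $\al \unlhd \ga$, $\be \unlhd \ga$, $\al \sqcup \be \unlhd \ga$, and $\ga \unlhd \al+\be$ separately, making essential use of the symmetry $c_{\al,\be}^\ga = c_{\be,\al}^\ga = c_{\al',\be'}^{\ga'}$ provided by \cref{L111223_2}. Fix a semistandard lattice tableau $T$ of shape $\ga/\al$ and content $\be$ witnessing $c_{\al,\be}^\ga \geq 1$. Its mere existence forces $\al \subseteq \ga$ coordinatewise, which immediately yields $\al \unlhd \ga$ in the partial-sum sense. Applying the same observation to $c_{\be,\al}^\ga = c_{\al,\be}^\ga$ gives $\be \subseteq \ga$, hence $\be \unlhd \ga$.

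The heart of the argument is the upper bound $\ga \unlhd \al+\be$. I would first establish, as a structural lemma, that in any semistandard tableau of shape $\ga/\al$ satisfying the lattice condition every entry in row $i$ is at most $i$. The proof is by induction on $i$ using the reverse reading word (rows top-to-bottom, each row right-to-left): in row $1$ the rightmost entry is read first, so the lattice condition forces it (and hence all of row $1$ by weak row-increase) to equal $1$; for the inductive step, if the smallest value in row $i$ exceeding $i$ is some $j \geq i+1$ occurring at rightmost column $c$, then at the moment this $j$ is read the running count of $(j-1)$'s is $0$ (rows above row $i$ only carry entries $\leq i-1 \leq j-2$ by induction, while the row-$i$ entries already read, lying to the right of $c$, are all $\geq j$ by row weak-increase), whereas the count of $j$'s has just become $\geq 1$, contradicting the lattice condition. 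Granted this lemma, the $\sum_{i=1}^k(\ga_i-\al_i)$ cells in the first $k$ rows of $\ga/\al$ only carry values in $\{1,\ldots,k\}$, so this number is at most the total multiplicity $\sum_{i=1}^k \be_i$ of those values in $T$; rearranging gives $\sum_{i=1}^k \ga_i \leq \sum_{i=1}^k(\al_i+\be_i)$.

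The remaining inequality $\al \sqcup \be \unlhd \ga$ now follows by conjugation. From \cref{L111223_2} we have $c_{\al',\be'}^{\ga'} = c_{\al,\be}^\ga \geq 1$, so the previous paragraph applied to $\al',\be',\ga'$ yields $\ga' \unlhd \al'+\be'$. Combining the standard identity $(\al \sqcup \be)' = \al'+\be'$ (both partitions have $j$-th part equal to $|\{i:\al_i \geq j\}| + |\{i:\be_i \geq j\}|$) with the fact that conjugation reverses dominance on partitions of a fixed size converts this into $\al \sqcup \be = (\al'+\be')' \unlhd \ga$.

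The main obstacle is the combinatorial lemma that row $i$ of an LR tableau contains only entries $\leq i$; after that is in hand, each of the four dominance inequalities reduces to a short bookkeeping step or a symmetry argument via \cref{L111223_2}.
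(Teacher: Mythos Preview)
Your proof is correct and follows essentially the same route as the paper's: establish $\al\subseteq\ga$ from the definition, prove the key fact that entries in row $i$ of an LR tableau are at most $i$ to get $\ga\unlhd\al+\be$, and then use the symmetries of \cref{L111223_2} together with $(\al\sqcup\be)'=\al'+\be'$ and reversal of dominance under conjugation to obtain the remaining inequalities. The only difference is that you supply a careful inductive proof of the row-bound lemma, whereas the paper merely states it.
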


\begin{proof}
As $c_{\al,\be}^\ga\geq 1$ we have $\al\subseteq\ga$. Then in any semistandard tableau of shape $\ga\setminus\al$ which satisfy the lattice property, nodes in rows $i$ can only have content $j$ with $j\leq i$. So $\ga\unlhd\al+\be$. The lemma then follows by \cref{L111223_2}, since $\al\sqcup\be=(\al'+\be')'$ and the dominance order is reversed when conjugating partitions (see for example \cite[p. 9]{JamesBook}).
\end{proof}

\begin{lemma}\label{L131123}
Let $\ga,\de\in\Par(n)$ be such that there exist $0\leq i\leq n$, $\al\in\Par(n-i)$ and $\be\in\Par(i)$ with $c^\ga_{\al,\be}c^\de_{\al,\be}>0$. Then $\de\unrhd\bdbl(\la)$.
\end{lemma}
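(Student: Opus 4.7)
The plan is to interpret the conclusion as $\de\unrhd\bdbl(\ga)$ (the symbol $\la$ does not appear in the hypotheses, so it must be a typo for $\ga$) and to deduce the inequality via an intermediate step that factors through $\al\sqcup\be$.

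First, I would apply \cref{L101123_4} to both coefficients: $c^\ga_{\al,\be}>0$ yields $\ga\unlhd\al+\be$, and $c^\de_{\al,\be}>0$ yields $\al\sqcup\be\unlhd\de$. By transitivity of $\unlhd$, it then suffices to prove the key intermediate claim
\[\bdbl(\ga)\unlhd\al\sqcup\be.\]
Because dominance is just the collection of inequalities on partial sums (and makes sense even when $\bdbl(\ga)$ is only a composition), this reduces to comparing $\sum_{j=1}^{m}\bdbl(\ga)_j$ and $\sum_{j=1}^{m}(\al\sqcup\be)_j$ for every $m\geq 1$, split into the cases $m=2k$ and $m=2k-1$.

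For $m=2k$, the definition of $\bdbl$ gives $\sum_{j=1}^{2k}\bdbl(\ga)_j=\sum_{j=1}^{k}\ga_j$. On the other hand, $\sum_{j=1}^{2k}(\al\sqcup\be)_j$ is the sum of the $2k$ largest elements of the multiset $\{\al_1,\al_2,\ldots\}\cup\{\be_1,\be_2,\ldots\}$, which dominates the sum of any $2k$ chosen entries, so it is at least $\sum_{j=1}^{k}\al_j+\sum_{j=1}^{k}\be_j\geq\sum_{j=1}^{k}\ga_j$ using $\ga\unlhd\al+\be$. The case $m=2k-1$ is the only real content: here $\sum_{j=1}^{2k-1}\bdbl(\ga)_j=\sum_{j=1}^{k-1}\ga_j+\lceil\ga_k/2\rceil$, while picking $\al_1,\ldots,\al_{k-1},\be_1,\ldots,\be_{k-1}$ together with $\max(\al_k,\be_k)$ yields
\[\sum_{j=1}^{2k-1}(\al\sqcup\be)_j\geq\sum_{j=1}^{k-1}\al_j+\sum_{j=1}^{k-1}\be_j+\max(\al_k,\be_k).\]
Averaging the inequalities $\sum_{j=1}^{k-1}\ga_j\leq\sum_{j=1}^{k-1}(\al_j+\be_j)$ and $\sum_{j=1}^{k}\ga_j\leq\sum_{j=1}^{k}(\al_j+\be_j)$ coming from $\ga\unlhd\al+\be$, together with $\max(\al_k,\be_k)\geq(\al_k+\be_k)/2$, gives that the right-hand side is at least $\sum_{j=1}^{k-1}\ga_j+\ga_k/2$.

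The main technical point is promoting $\ga_k/2$ to $\lceil\ga_k/2\rceil$. This is immediate from integrality: the left-hand side $\sum_{j=1}^{2k-1}(\al\sqcup\be)_j$ is always an integer, and when $\ga_k$ is odd the lower bound $\sum_{j=1}^{k-1}\ga_j+\ga_k/2$ is a half-integer, hence the integer left-hand side automatically exceeds its ceiling, which is exactly $\sum_{j=1}^{k-1}\ga_j+\lceil\ga_k/2\rceil$; when $\ga_k$ is even there is nothing to do. Assembling the two partial-sum cases proves $\bdbl(\ga)\unlhd\al\sqcup\be$, and transitivity with $\al\sqcup\be\unlhd\de$ completes the proof.
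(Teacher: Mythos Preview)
Your proof is correct. Both you and the paper begin by invoking \cref{L101123_4} to obtain $\ga\unlhd\al+\be$ and $\al\sqcup\be\unlhd\de$, but from there the routes diverge. You prove $\bdbl(\ga)\unlhd\al\sqcup\be$ directly by checking partial sums, with the odd-index case handled by an averaging-plus-integrality argument, and then chain with $\al\sqcup\be\unlhd\de$. The paper instead introduces the auxiliary partition $\overline{\de}=(\de_1+\de_2,\de_3+\de_4,\ldots)$, observes from the even partial sums alone that $\overline{\de}\unrhd\ga$, and then chains $\de\unrhd\bdbl(\overline{\de})\unrhd\bdbl(\ga)$, the first inequality using that $\de$ is a partition and the second using (implicitly) that $\bdbl$ is monotone for dominance. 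Your argument is a bit longer but fully self-contained, since the monotonicity of $\bdbl$ itself requires essentially the same integrality trick you spell out; the paper's version is shorter because it packages that trick into the unproved monotonicity step.
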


\begin{proof}
By Lemma \ref{L101123_4}, $\al+\be\unrhd\ga$. Further $\de\unrhd\al\sqcup\be$, so that $\de\unrhd(\al_1,\be_1,\al_2,\be_2,\ldots)$. In particular, for each $k\geq 1$,
\[\sum_{j=1}^{2k}\de_j\geq\sum_{j=1}^k(\al_j+\be_j)\geq\sum_{j=1}^k\ga_j.\]
So $\overline{\de}=(\de_1+\de_2,\de_3+\de_4,\ldots)\unrhd\ga$. Since $\de$ is a partition it follows that $\de\unrhd\bdbl(\overline{\de})\unrhd\bdbl(\ga)$.
\end{proof}

The next lemmas compare Littlewood-Richardson coefficients for certain triples of partitions. 

\begin{lemma}\label{L111223}
Let $\al,\be,\ga$ be partitions with $|\al|+|\be|=|\ga|$. If $m\geq 0$ and $|\al^{r\leq m}|+|\be^{r\leq m}|=|\ga^{r\leq m}|$, then $c_{\al,\be}^{\ga}=c_{\al^{r\leq m},\be^{r\leq m}}^{\ga^{r\leq m}}c_{\al^{r>m},\be^{r>m}}^{\ga^{r>m}}$.
\end{lemma}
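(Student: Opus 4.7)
The plan is to exhibit a content-preserving bijection between Littlewood--Richardson (LR) tableaux of shape $\ga/\al$ with content $\be$ and pairs $(T_1,T_2)$ of LR tableaux of shapes $\ga^{r\leq m}/\al^{r\leq m}$ and $\ga^{r>m}/\al^{r>m}$ with contents $\be^{r\leq m}$ and $\be^{r>m}$ respectively. The numerical hypothesis $|\ga^{r\leq m}|=|\al^{r\leq m}|+|\be^{r\leq m}|$ is precisely what makes such a splitting well defined.

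The crucial ingredient is the (well-known) fact that in any LR tableau every entry in row $i$ is at most $i$. I will prove this by induction on $i$: the base $i=1$ is immediate, since the very first letter of the reading word must equal $1$ (else the lattice condition fails on the first step), and row $1$ read right-to-left is weakly decreasing, forcing all entries in row $1$ to be $1$. For the inductive step, suppose a box $(i,j)$ carried an entry $v>i$ and pick the leftmost such in row $i$. The induction hypothesis shows that no entry equal to $v-1\geq i$ can have appeared in rows $<i$, while semistandardness forces the entries strictly to the right of $(i,j)$ in row $i$ to be $\geq v$. Reading $(i,j)$ therefore pushes the count of $v$ past the count of $v-1$, contradicting the lattice property.

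Granted the row bound, let $T$ be an LR tableau of shape $\ga/\al$ with content $\be$ and let $\nu$ denote the content of $T$ restricted to rows $\leq m$. The row bound gives $\nu_i=0$ for $i>m$, and combined with $\nu\leq\be$ componentwise and the size identity $|\nu|=|\ga^{r\leq m}|-|\al^{r\leq m}|=|\be^{r\leq m}|$, this forces $\nu=\be^{r\leq m}$. In particular no entry $\leq m$ appears in rows $>m$, so $T$ splits cleanly as $T_1$ (rows $\leq m$) and $T_2$ (rows $>m$, entries re-indexed by subtracting $m$), each of which inherits semistandardness and the lattice property. The inverse construction glues $T_1$ on top of a shifted $T_2$: column-strictness across the row-$m$/row-$(m+1)$ boundary is automatic since the last entries of $T_1$ in any column are $\leq m$ while the first entries of the shifted $T_2$ in that column are $\geq m+1$, and the lattice condition is checked on the two ranges of entries separately, using $\be_m\geq\be_{m+1}$ at the junction. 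The resulting mutually inverse constructions yield the claimed factorisation; the main obstacle is the row bound lemma, after which everything else is a routine verification.
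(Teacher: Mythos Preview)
Your proof is correct and follows essentially the same route as the paper's: both set up a bijection between LR tableaux of shape $\ga/\al$ and content $\be$ and pairs of LR tableaux for the truncated data, using the row bound to force the content carried by rows $\leq m$ to be exactly $\be^{r\leq m}$, and invoking $\be_m\geq\be_{m+1}$ to verify the lattice condition at the junction when gluing. The only difference is that you supply an inductive proof of the row bound (entries in row $i$ are at most $i$), whereas the paper quotes it as known; you might also mention, as the paper does, that $\al\subseteq\ga$ if and only if $\al^{r\leq m}\subseteq\ga^{r\leq m}$ and $\al^{r>m}\subseteq\ga^{r>m}$, so that the case where one side vanishes by definition is explicitly covered.
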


\begin{proof}
By definition $\al\subseteq\ga$ if and only if $\al_i\leq\ga_i$ for all $i$ if and only if $\al^{r\leq m}\subseteq\ga^{r\leq m}$ and $\al^{r>m}\subseteq\ga^{r>m}$.

Assume that the above conditions holds (else both sides are 0). Let $t$ be any semistandard tableau of shape $\ga\setminus \al$ and content $\be$ which satisfies the lattice condition. Then any node in the first $m$ rows of $t$ has content $\leq m$. Since $|\al^{r\leq m}|+|\be^{r\leq m}|=|\ga^{r\leq m}|$, nodes in lower rows must have content $>m$. Thus $t$ is obtained by joining two semistandard tableaux $s_t$ and $u_t$, one of shape $\ga^{r\leq m}\setminus\al^{r\leq m}$ and content $\be^{r\leq m}$ and another of shape $\ga^{r>m}\setminus\al^{r>m}$ and content $\be^{r>m}$ (the second one being shifted down by $m$ rows and with each node's content increased by $m$). Further $s_t$ and $u_t$ satisfy the lattice condition since $t$ does.

Conversely if $u$ and $v$ are semistandard tableaux of shapes $\ga^{r\leq m}\setminus\al^{r\leq m}$ and $\ga^{r>m}\setminus\al^{r>m}$ and contents $\be^{r\leq m}$ and$\be^{r>m}$ respectively, then shifting $v$ down by $m$ rows and increasing the contents of its nodes by $m$ and then taking the union of this new tableau with $u$ we obtain a semistandard tableau $t_{u,v}$ of shape $\ga\setminus \al$ and content $\be$. Since $\be_m\geq\be_{m+1}$, nodes of content $m$ in $t_{u,v}$ are all in rows $\leq m$ while those of content $m+1$ are in rows $>m+1$, $t_{u,v}$ satisfies the lattice condition if both $u$ and $v$ do.

So semistandard tableaux of shape $\ga\setminus\al$ and content $\be$ satisfying the lattice conditions are in bijection with pairs of semistandard tableaux, one of shape $\ga^{r\leq m}\setminus\al^{r\leq m}$ and content $\be^{r\leq m}$ and another of shape $\ga^{r>m}\setminus\al^{r>m}$ and content $\be^{r>m}$, both satifying the lattice condition. The lemma follows.
\end{proof}

\begin{lemma}\label{L101123_5}
Let $\ell,m\geq 1$ and $\al,\be,\ga$ be partitions with $h(\al),h(\ga)\leq \ell$ and $\al_1,\ga_1\leq m$. Then
\[c_{\al,\be}^\ga=c_{\al+(1^\ell),\be}^{\ga+(1^\ell)}=c_{(m)\sqcup\al,\be}^{(m)\sqcup\ga}.\]
\end{lemma}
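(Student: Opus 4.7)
The plan is to prove the first equality by an explicit bijection between Littlewood-Richardson tableaux, and then to deduce the second equality from the first via conjugation using \cref{L111223_2}.

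For $c_{\alpha,\beta}^\gamma = c_{\alpha+(1^\ell),\beta}^{\gamma+(1^\ell)}$, the hypothesis $h(\alpha), h(\gamma) \leq \ell$ ensures that $\alpha+(1^\ell)$ has parts $(\alpha_1+1, \alpha_2+1, \ldots, \alpha_\ell+1)$ (with $\alpha_i = 0$ for $i > h(\alpha)$), and similarly for $\gamma+(1^\ell)$. A direct row-length computation then shows that row $i$ of the skew shape $(\gamma+(1^\ell))/(\alpha+(1^\ell))$ consists of the boxes in columns $\alpha_i+2, \ldots, \gamma_i+1$, i.e.\ precisely row $i$ of $\gamma/\alpha$ translated one column to the right; since $h(\gamma) \leq \ell$, no further rows appear. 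I would then send a filling $T$ of $\gamma/\alpha$ to its horizontal translate $T'$ of the new skew shape. Uniform horizontal translation manifestly preserves content, the strict-increase-down-columns and weak-increase-along-rows conditions, and the reading word used in the lattice condition (read row by row from the top, within each row from the right). Hence both semistandardness and the lattice condition transfer, giving the claimed bijection between LR tableaux and the desired equality.

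For $c_{\alpha,\beta}^\gamma = c_{(m)\sqcup\alpha,\beta}^{(m)\sqcup\gamma}$, I would combine \cref{L111223_2} with the case just proved. The key auxiliary identity is
\[(\alpha'+(1^m))' = (m)\sqcup\alpha,\]
which holds because $\alpha_1 \leq m$ implies that the first column of $\alpha'+(1^m)$ has height $m$ (becoming the length-$m$ first row after conjugation) while the remaining columns simply reconstruct $\alpha$; the analogous identity holds for $\gamma$. Since $h(\alpha') = \alpha_1 \leq m$ and $h(\gamma') = \gamma_1 \leq m$, the first equality applies to $(\alpha',\beta',\gamma')$ with $m$ in place of $\ell$, and two applications of \cref{L111223_2} then give
\[c_{\alpha,\beta}^\gamma = c_{\alpha',\beta'}^{\gamma'} = c_{\alpha'+(1^m),\beta'}^{\gamma'+(1^m)} = c_{((m)\sqcup\alpha)',\beta'}^{((m)\sqcup\gamma)'} = c_{(m)\sqcup\alpha,\beta}^{(m)\sqcup\gamma}.\]

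Since the argument is essentially a translation bijection, I expect no real obstacle: the only point requiring a small check is the conjugation identity $(\alpha'+(1^m))' = (m)\sqcup\alpha$, after which everything follows by unpacking the Littlewood-Richardson rule. Alternatively, the second equality admits a direct vertical-translation proof analogous to the first, using that $(m)\sqcup\alpha = (m,\alpha_1,\alpha_2,\ldots)$ under the hypothesis $\alpha_1 \leq m$, so that $((m)\sqcup\gamma)/((m)\sqcup\alpha)$ is obtained from $\gamma/\alpha$ by shifting every box down one row, an operation that again preserves content, semistandardness and reading word.
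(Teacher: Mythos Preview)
Your proposal is correct and follows essentially the same approach as the paper: the first equality is proved by the horizontal shift bijection (the paper phrases this as ``shifting all nodes by 1 along the rows''), and for the second equality the paper says it ``is similar (or follows using \cref{L111223_2})'', which is exactly your conjugation argument together with the alternative vertical-shift proof you also sketch. Your write-up simply fills in the details the paper leaves implicit, including the conjugation identity $(\alpha'+(1^m))'=(m)\sqcup\alpha$.
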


\begin{proof}
We prove only that $c_{\al,\be}^\ga=c_{\al+(1^\ell),\be}^{\ga+(1^\ell)}$, the proof of $c_{\al,\be}^\ga=c_{(m)\sqcup\al,\be}^{(m)\sqcup\ga}$ being similar (this second equation can otherwise be obtained from the first using \cref{L111223_2}).

We have that $\al\subseteq\ga$ if and only if $\al+(1^\ell)\subseteq\ga+(1^\ell)$. In this case tableaux of shapes $\ga\setminus\al$ and $(\ga+(1^\ell))\setminus(\al+(1^\ell))$ can be naturally identified by simply shifting all nodes by 1 along the rows since $h(\al),h(\ga)\leq \ell$.
\end{proof}

The next results compare certain products of Specht modules and permutation modules. Here, as well as in the following, when considering multiplicities of Specht modules, or other irreducible characteristic 0 modules, in other modules which can be defined in characteristic 0, such multiplicities will be interpreted as multiplicities in characteristic 0.

\begin{thm}\label{L111223_4}
Let $n\geq 0$ and $\nu,\psi\in\Par(n)$. Assume that for some $m,\ell\geq 0$, $h(\nu)=m$ and $|\nu^{r\leq m}|=|\psi^{r\leq 2m}|=\ell$. 
When working over $\C$, $[S^{\nu}\otimes M^{(n-k,k)}:S^\psi]$ is given by
\begin{align*}
&\sum_{j=\max\{0,k+\ell-n\}}^{\min\{k,\ell\}}[S^{\nu^{r\leq m}}\otimes M^{(\ell-j,j)}:S^{\psi^{r\leq 2m}}]\cdot[S^{\nu^{r>m}}\otimes M^{(n-\ell-k+j,k-j)}:S^{\psi^{r>2m}}]
\end{align*}
for every $0\leq k\leq n$.
\end{thm}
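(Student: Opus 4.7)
The plan is to expand both sides using \cref{L101123_3} and then split each resulting Littlewood--Richardson coefficient via \cref{L111223}, with cuts at row $m$ for $\nu$ and at row $2m$ for $\psi$. By \cref{L101123_3} the left-hand side equals
\[\sum_{\ga\in\Par(n-k),\,\de\in\Par(k)}c_{\ga,\de}^\nu\,c_{\ga,\de}^\psi,\]
and for each contributing $(\ga,\de)$ I would set $j:=|\de^{r\leq m}|$. The four sizes $|\ga^{r\leq m}|,\,|\de^{r\leq m}|,\,|\ga^{r>m}|,\,|\de^{r>m}|$ then equal $\ell-j,\,j,\,n-\ell-k+j,\,k-j$ respectively, so non-negativity of these gives precisely the bounds $\max\{0,k+\ell-n\}\leq j\leq\min\{k,\ell\}$ appearing in the formula.

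The key step is to apply \cref{L111223} twice to each term, factoring $c_{\ga,\de}^\nu$ at row $m$ and $c_{\ga,\de}^\psi$ at row $2m$. The size hypothesis for the first factorisation, $|\ga^{r\leq m}|+|\de^{r\leq m}|=|\nu^{r\leq m}|=\ell$, holds by the choice of $j$. The hypothesis for the second, $|\ga^{r\leq 2m}|+|\de^{r\leq 2m}|=|\psi^{r\leq 2m}|=\ell$, is where the assumption $h(\nu)=m$ enters: for $c_{\ga,\de}^\nu>0$ one has $\ga\subseteq\nu$ by definition of the Littlewood--Richardson coefficient and $\de\subseteq\nu$ by \cref{L111223_2}, hence $h(\ga),h(\de)\leq m\leq 2m$; thus $\ga^{r\leq 2m}=\ga$, $\de^{r\leq 2m}=\de$, and the identity reduces to $|\ga|+|\de|=\ell$ already verified above.

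Substituting both factorisations into the sum and grouping terms by $j$, each of the two resulting inner double sums is, by \cref{L101123_3} applied in reverse, a multiplicity of the form $[S^{\nu^{r\leq m}}\otimes M^{(\ell-j,j)}:S^{\psi^{r\leq 2m}}]$ respectively $[S^{\nu^{r>m}}\otimes M^{(n-\ell-k+j,k-j)}:S^{\psi^{r>2m}}]$; the product of these two multiplicities, summed over $j$, is exactly the right-hand side. The only non-trivial bookkeeping is the mismatch between cutting $\nu$ at row $m$ and $\psi$ at row $2m$, which is reconciled by the height constraint $h(\nu)=m$ as above.
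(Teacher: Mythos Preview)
Your argument is technically valid for the statement as literally written, but notice that the hypothesis $h(\nu)=m$ forces $\nu^{r\leq m}=\nu$, hence $\ell=n$, and then $\nu^{r>m}=\psi^{r>2m}=\varnothing$; the identity becomes tautological (only the term $j=k$ survives). The paper's own proof disposes of this in its first paragraph and then treats the substantive case $m<h(\nu)$, so the hypothesis $h(\nu)=m$ is evidently a misprint. It is against that substantive case that your outline should be measured, and there two steps break down.

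First, the assertion that $|\ga^{r\leq m}|+|\de^{r\leq m}|=\ell$ ``holds by the choice of $j$'' is not a proof: setting $j:=|\de^{r\leq m}|$ says nothing about $|\ga^{r\leq m}|$. This equality is exactly the nontrivial point, and the paper obtains it from \cref{L101123_4}: $\al+\be\unrhd\nu$ gives $|\al^{r\leq m}|+|\be^{r\leq m}|\geq|\nu^{r\leq m}|=\ell$, while $\al\sqcup\be\unlhd\psi$ gives $|\al^{r\leq m}|+|\be^{r\leq m}|\leq|(\al\sqcup\be)^{r\leq 2m}|\leq|\psi^{r\leq 2m}|=\ell$.

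Second, and more seriously, the factorisation
\[
c_{\ga,\de}^{\psi}=c_{\ga^{r\leq m},\de^{r\leq m}}^{\psi^{r\leq 2m}}\,c_{\ga^{r>m},\de^{r>m}}^{\psi^{r>2m}}
\]
is \emph{not} an instance of \cref{L111223}: that lemma only permits cutting all three partitions at the same row index. Cutting $\psi$ at row $2m$ would yield factors involving $\ga^{r\leq 2m},\de^{r\leq 2m}$, which in general differ from $\ga^{r\leq m},\de^{r\leq m}$ once $h(\ga),h(\de)>m$. Your justification via $h(\ga),h(\de)\leq h(\nu)=m$ is precisely what fails without the trivialising hypothesis. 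The paper establishes this mixed factorisation by a considerably longer route: after pinning down $\al_m,\be_m\geq\psi_{2m}\geq\psi_{2m+1}\geq\al_{m+1},\be_{m+1}$, it passes to conjugate partitions via \cref{L111223_2}, applies \cref{L111223} along columns, and uses \cref{L101123_5} to strip and reattach rectangular blocks. The bijection between pairs $(\al,\be)$ and quadruples $(\ga,\de,\rho,\si)$ also needs these inequalities to ensure that $\ga\sqcup\rho$ and $\de\sqcup\si$ are genuine partitions with the correct row-cuts, a bookkeeping issue your sketch does not address.
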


\begin{proof}
If $m=0$ then $\ell=0$, so the only term appearing in the sum is that with $j=0$ and then the theorem holds (as in this case $\nu^{r>m}=\nu$ and $\psi^{r>2m}=\psi$). Similarly the theorem holds if $m\geq h(\nu)$ or $2m\geq h(\psi)$ (the two conditions being equivalent by the assumption $|\nu^{r\leq m}|=|\psi^{r\leq 2m}|$).

So we may assume that $m\geq 1$, $m<h(\nu)$ and $2m<h(\psi)$. By \cref{L101123_3},
\[[S^{\nu}\otimes M^{(n-k,k)}:S^\psi]=\sum_{\al\in\Par(n-k),\be\in\Par(k)}c_{\al,\be}^\nu c_{\al,\be}^\psi.\]

Fix $\al\in\Par(n-k)$ and $\be\in\Par(k)$ with $c_{\al,\be}^\nu c_{\al,\be}^\psi>0$. Then, by \cref{L101123_4}, $\al+\be\unrhd\nu$ and $\al\sqcup\be\unlhd\psi$. In particular
\begin{align*}
|\al^{r\leq m}|+|\be^{r^\leq m}|&=|(\al+\be)^{r\leq m}|\geq|\nu^{r\leq m}|=\ell,\\
|\al^{r\leq m}|+|\be^{r\leq m}|&\leq|(\al\sqcup\be)^{r\leq 2m}|\leq|\psi^{r\leq 2m}|=\ell.
\end{align*}
So $|\al^{r\leq m}|+|\be^{r\leq m}|=\ell$ and $(\al\sqcup\be)^{r\leq 2m}=\al^{r\leq m}\sqcup\be^{r\leq m}$, in which case 
$\al\sqcup\be\unlhd\psi$ if and only if both 
$\al^{r\leq m}\sqcup\be^{r\leq m}\unlhd\psi^{r\leq 2m}$ and $\al^{r>m}\sqcup\be^{r>m}\unlhd\psi^{r>2m}$ hold. In particular
\[\al_m,\be_m\geq\psi_{2m}\geq\psi_{2m+1}\geq\al_{m+1},\be_{m+1}.\]
Since $2m<h(\psi)$ we also have by the above equation that $h(\al^{r\leq m}),h(\be^{r\leq m})=m$.

Similarly
\begin{align*}
&[S^{\nu^{r\leq m}}\otimes M^{(\ell-j,j)}:S^{\psi^{r\leq 2m}}]\cdot[S^{\nu^{r>m}}\otimes M^{(n-\ell-k+j,k-j)}:S^{\psi^{r>2m}}]\\
&=\sum_{{\ga\in\Par(\ell-j),\de\in\Par(j),}\atop{\rho\in\Par(n-\ell-k+j),\si\in\Par(k-j)}}c_{\ga,\de}^{\nu^{r\leq m}}c_{\ga,\de}^{\psi^{r\leq 2m}}c_{\rho,\si}^{\nu^{r>m}}c_{\rho,\si}^{\psi^{r>m}}.
\end{align*}
Assume that $c_{\ga,\de}^{\nu^{r\leq m}}c_{\ga,\de}^{\psi^{r\leq 2m}}c_{\rho,\si}^{\nu^{r>m}}c_{\rho,\si}^{\psi^{r>m}}>0$. By \cref{L101123_4} we have that if $c_{\ga,\de}^{\nu^{r\leq m}}>0$ then $h(\ga),h(\de)\leq h(\nu^{r\leq m})=m$, so $h(\ga)+h(\de)\leq 2m= h(\psi^{r\leq 2m})$. From $c_{\ga,\de}^{\psi^{r\leq 2m}}>0$ it follows that $h(\ga)=h(\de)=m$ and $\ga_m,\de_m\geq \psi_{2m}$. From $c_{\rho,\si}^{\psi^{r>2m}}>0$ that $\rho_1,\si_1\leq\psi_{2m+1}$.

Identifying $\al=\ga\sqcup\rho$ and $\be=\de\sqcup\si$ (or equivalently $\ga=\al^{r\leq m}$, $\de=\be^{r\leq m}$, $\rho=\al^{r>m}$, $\si=\be^{r>m}$), it is enough to show that $c_{\al,\be}^\nu=c_{\al^{r\leq m},\be^{r\leq m}}^{\nu^{r\leq m}}c_{\al^{r>m},\be^{r>m}}^{\nu^{r>m}}$ and $c_{\al,\be}^\psi=c_{\al^{r\leq m},\be^{r\leq m}}^{\psi^{r\leq 2m}}c_{\al^{r>m},\be^{r>m}}^{\psi^{r>2m}}$ whenever $\al,\be$ are partitions satisfying $|\al^{r\leq m}|+|\be^{r\leq m}|=\ell$, $\al_m,\be_m\geq\psi_{2m}$ and $\al_{m+1},\be_{m+1}\leq\psi_{2m+1}$.

By \cref{L111223}, $c_{\al,\be}^\nu=c_{\al^{r\leq m},\be^{r\leq m}}^{\nu^{r\leq m}}c_{\al^{r>m},\be^{r>m}}^{\nu^{r>m}}$ holds in this case.

Let $a:=\psi_{2m+1}$. Since $\al_{m+1},\be_{m+1}\leq \psi_{2m+1}=a$ and $\al_m\be_m\geq\psi_m\geq a$, we can decompose $\al=X^\al\sqcup Y^\al\sqcup Z^\al$ with
\begin{align*}
X^\al&=\{(i,j):1\leq i\leq m,\,1\leq j\leq a\},\\
Y^\al&=\{(i,j):(i,j)\in\al,\,i\leq m,\,j>a\},\\
Z^\al&=\{(i,j):(i,j)\in\al,\,i>m,\,j\leq a\}
\end{align*}
and similarly for $\be=X^\be\sqcup Y^\be\sqcup Z^\be$. Using a minor variation of the above decomposition we have that $\psi=X^\psi\sqcup Y^\psi\sqcup Z^\psi$ with
\begin{align*}
X^\psi&=\{(i,j):1\leq i\leq 2m,\,1\leq j\leq a\},\\
Y^\psi&=\{(i,j):(i,j)\in\psi,\,i\leq 2m,\,j>a\},\\
Z^\psi&=\{(i,j):(i,j)\in\psi,\,i>2m,\,j\leq a\}.
\end{align*}
By definition of the above sets
\begin{align*}
\al^{r\leq m}&=X^\al\sqcup Y^\al,&\al^{c\leq a}&=X^\al\sqcup Z^\al,&\al^{r\leq m,c\leq a}&=X^\al,\\
\be^{r\leq m}&=X^\be\sqcup Y^\be,&\be^{c\leq a}&=X^\be\sqcup Z^\be,&\be^{r\leq m,c\leq a}&=X^\be,\\
\psi^{r\leq 2m}&=X^\psi\sqcup Y^\psi,&\psi^{c\leq a}&=X^\psi\sqcup Z^\psi,&\psi^{r\leq 2m,c\leq a}&=X^\psi
\end{align*}
and $\al^{r>m,c>a}=\be^{r>m,c>a}=\psi^{r>2m,c>a}=\varnothing$. 
As $|\al^{r\leq m}|+|\be^{r\leq m}|=|\psi^{r\leq 2m}|$ by assumption and
\[|\al^{r\leq m,c\leq a}|+|\be^{r\leq m,c\leq a}|=|X^\al|+|X^\be|=2am=|X^\psi|=|\psi^{r\leq 2m,c\leq a}|,\]
we also have that $|\al^{c\leq a}|+|\be^{c\leq a}|=|\psi^{c\leq a}|$.

Then
\begin{align*}
&c_{\al,\be}^\psi=c_{\al',\be'}^{\psi'}\\
&=c_{(\al')^{r\leq a},(\be')^{r\leq a}}^{(\psi')^{r\leq a}}c_{(\al')^{r>a},(\be')^{r>a}}^{(\psi')^{r>a}}\\
&=c_{\al^{c\leq a},\be^{c\leq a}}^{\psi^{c\leq a}}c_{\al^{c>a},\be^{c>a}}^{\psi^{c>a}}\\
&=c_{(\al^{r>m,c\leq a}\sqcup(a^m)),(\be^{r>m,c\leq a}\sqcup(a^m))}^{\psi^{r>2m,c\leq a}\sqcup(a^{2m})}c_{\al^{r\leq m,c>a},\be^{r\leq m,c>a}}^{\psi^{r\leq 2m,c>a}}\\
&=c_{\al^{r>m,c\leq a},\be^{r>m,c\leq a}}^{\psi^{r>2m,c\leq a}}c_{\al^{r\leq m,c>a},\be^{r\leq m,c>a}}^{\psi^{r\leq 2m,c>a}}\\
&=c_{\al^{r>m,c\leq a},\be^{r>m,c\leq a}}^{\psi^{r>2m,c\leq a}}c_{(\al')^{c\leq m,r>a},(\be')^{c\leq m,r>a}}^{(\psi')^{c\leq 2m,r>a}}\\
&=c_{\al^{r>m,c\leq a},\be^{r>m,c\leq a}}^{\psi^{r>2m,c\leq a}}c_{(\al')^{c\leq m,r>a},(\be')^{c\leq m,r>a}}^{(\psi')^{c\leq 2m,r>a}}(c_{(m),(m)}^{(2m)})^a\\
&=c_{\al^{r>m,c\leq a},\be^{r>m,c\leq a}}^{\psi^{r>2m,c\leq a}}c_{((\al')^{c\leq m,r>a}\sqcup(m^a)),((\be')^{c\leq m,r>a}\sqcup(m^a))}^{(\psi')^{c\leq 2m,r>a}\sqcup(2m^a)}\\
&=c_{\al^{r>m,c\leq a},\be^{r>m,c\leq a}}^{\psi^{r>2m,c\leq a}}c_{(\al^{r\leq m,c>a}+(a^m)),(\be^{r\leq m,c>a}+(a^m))}^{\psi^{r\leq 2m,c>a}+(a^{2m})}\\
&=c_{\al^{r>m},\be^{r>m}}^{\psi^{r>2m}}c_{\al^{r\leq m},\be^{r\leq m}}^{\psi^{r\leq 2m}}
\end{align*}
with \cref{L111223_2} applied in the first, third, sixth and ninth row, \cref{L111223} on the second and toghether with \cref{L111223_2} repeatedly on the eight row, \cref{L101123_5} repeatedly on the fifth row, using that $c_{(m),(m)}^{(2m)}=1$ by explicit computation on the seventh row and identifying partitions the fourth and tenth row.
\end{proof}

\begin{thm}\label{L181223}
Assume that $n,\ell\geq 0$ and $\al,\be\in\Par(n)$ with $h(\al)\leq\ell$ and $h(\be)\leq 2\ell$. When working over $\C$, for $0\leq k\leq n+2\ell$, 
\[[S^{\al+(2^\ell)}\otimes M^{(n+2\ell-k,k)}:S^{\be+(1^{2\ell})}]=[S^\al\otimes M^{(n+\ell-k,k-\ell)}:S^\be]\]
if $\ell\leq k\leq n+\ell$ and it is
\[[S^{\al+(2^\ell)}\otimes M^{(n+2\ell-k,k)}:S^{\be+(1^{2\ell})}]=0\]
otherwise.
\end{thm}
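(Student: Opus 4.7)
The plan is to expand both sides via \cref{L101123_3} and match the resulting sums term by term under the bijection $(\gamma,\delta)=(\tilde\gamma+(1^\ell),\tilde\delta+(1^\ell))$. Concretely, by \cref{L101123_3},
\[
[S^{\alpha+(2^\ell)}\otimes M^{(n+2\ell-k,k)}:S^{\beta+(1^{2\ell})}]=\sum_{\gamma\in\Par(n+2\ell-k),\,\delta\in\Par(k)}c_{\gamma,\delta}^{\alpha+(2^\ell)}c_{\gamma,\delta}^{\beta+(1^{2\ell})},
\]
and likewise $[S^{\alpha}\otimes M^{(n+\ell-k,k-\ell)}:S^{\beta}]=\sum c_{\tilde\gamma,\tilde\delta}^\alpha c_{\tilde\gamma,\tilde\delta}^\beta$ over $(\tilde\gamma,\tilde\delta)$ with $|\tilde\delta|=k-\ell$, $|\tilde\gamma|+|\tilde\delta|=n$.

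First I constrain the indices of the left-hand sum. If $c_{\gamma,\delta}^{\alpha+(2^\ell)}>0$ then $\gamma,\delta\subseteq\alpha+(2^\ell)$, so $h(\gamma),h(\delta)\leq\ell$. If $c_{\gamma,\delta}^{\beta+(1^{2\ell})}>0$ then by \cref{L101123_4}, $\gamma\sqcup\delta\unlhd\beta+(1^{2\ell})$; comparing first columns under conjugation gives $h(\gamma)+h(\delta)\geq(\beta+(1^{2\ell}))'_1=2\ell$. Combined, $h(\gamma)=h(\delta)=\ell$, so $\gamma\supseteq(1^\ell)$ and $\delta\supseteq(1^\ell)$, and I set $\tilde\gamma:=\gamma-(1^\ell)$, $\tilde\delta:=\delta-(1^\ell)$, which satisfy $h(\tilde\gamma),h(\tilde\delta)\leq\ell$, $|\tilde\gamma|=n+\ell-k$, $|\tilde\delta|=k-\ell$. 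Hence the sum is empty unless $\ell\leq k\leq n+\ell$, settling the ``otherwise'' case.

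In the range $\ell\leq k\leq n+\ell$ the bijection is clear and it suffices to match the two Littlewood-Richardson factors. For the $\alpha+(2^\ell)$-factor, two applications of \cref{L101123_5} (valid since $h(\alpha),h(\alpha+(1^\ell)),h(\tilde\gamma),h(\tilde\delta)\leq\ell$) together with the symmetry $c_{\al,\be}^\ga=c_{\be,\al}^\ga$ from \cref{L111223_2} give
\[
c_{\tilde\gamma,\tilde\delta}^\alpha=c_{\tilde\gamma+(1^\ell),\tilde\delta}^{\alpha+(1^\ell)}=c_{\tilde\delta,\tilde\gamma+(1^\ell)}^{\alpha+(1^\ell)}=c_{\tilde\delta+(1^\ell),\tilde\gamma+(1^\ell)}^{\alpha+(2^\ell)}=c_{\gamma,\delta}^{\alpha+(2^\ell)}.
\]

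The main obstacle is the $\beta+(1^{2\ell})$-factor, since \cref{L101123_5} only covers adding columns of equal height to two of the three arguments, while here the three columns have heights $\ell,\ell,2\ell$. I handle this by conjugation (\cref{L111223_2}): using $h(\tilde\gamma),h(\tilde\delta)\leq\ell$ and $h(\beta)\leq 2\ell$, the conjugates satisfy $(\tilde\gamma+(1^\ell))'=(\ell)\sqcup\tilde\gamma'$, $(\tilde\delta+(1^\ell))'=(\ell)\sqcup\tilde\delta'$ and $(\beta+(1^{2\ell}))'=(2\ell)\sqcup\beta'$, so
\[
c_{\gamma,\delta}^{\beta+(1^{2\ell})}=c_{(\ell)\sqcup\tilde\gamma',\,(\ell)\sqcup\tilde\delta'}^{(2\ell)\sqcup\beta'}.
\]
I then give a direct combinatorial bijection. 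In any semistandard tableau of shape $((2\ell)\sqcup\beta')\setminus((\ell)\sqcup\tilde\gamma')$ and content $(\ell)\sqcup\tilde\delta'$ satisfying the lattice condition, the top skew row has length $2\ell-\ell=\ell$; its entries are weakly increasing and each is $\leq 1$ by the lattice condition, hence all equal $1$. Since the content provides exactly $\ell$ copies of $1$, these are all used in row $1$, forcing every entry in rows $\geq 2$ to be $\geq 2$. Deleting row $1$ and decreasing every remaining entry by $1$ yields a bijection with semistandard lattice tableaux of shape $\beta'\setminus\tilde\gamma'$ and content $\tilde\delta'$; the only extra prefix condition coming from the $\ell$ ones in row $1$ reads ``at every prefix the number of $2$'s read so far is $\leq\ell$'', and this is automatic because the total number of $2$'s equals $\tilde\delta'_1=h(\tilde\delta)\leq\ell$. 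Therefore $c_{\gamma,\delta}^{\beta+(1^{2\ell})}=c_{\tilde\gamma',\tilde\delta'}^{\beta'}=c_{\tilde\gamma,\tilde\delta}^\beta$. Summing over $(\tilde\gamma,\tilde\delta)$ and observing that $c_{\tilde\gamma,\tilde\delta}^\alpha>0$ already forces $h(\tilde\gamma),h(\tilde\delta)\leq h(\alpha)\leq\ell$, the right-hand side of the theorem is recovered via \cref{L101123_3}.
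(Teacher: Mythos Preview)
Your proof is correct but follows a different organisational route from the paper's. The paper first uses the symmetries $[S^\ga\otimes M:S^\de]=[S^{\de'}\otimes M:S^{\ga'}]$ to pass to conjugates, rewriting the left-hand side as $[S^{(2\ell)\sqcup\be'}\otimes M^{(n+2\ell-k,k)}:S^{(\ell^2)\sqcup\al'}]$, then invokes the heavier \cref{L111223_4} with $m=1$ to factor this as $\sum_j[S^{(2\ell)}\otimes M^{(2\ell-j,j)}:S^{(\ell^2)}]\cdot[S^{\be'}\otimes M^{(n-k+j,k-j)}:S^{\al'}]$, and finally evaluates the first factor via Young's rule as $\de_{j,\ell}$. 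You instead expand both sides by \cref{L101123_3}, constrain the summation via \cref{L101123_4}, and match the two Littlewood--Richardson factors separately using \cref{L101123_5} (twice) for the $\al+(2^\ell)$-factor and a direct top-row-stripping bijection for the $\be+(1^{2\ell})$-factor. Your bijection is in fact the $m=1$ case of \cref{L111223} together with $c_{(\ell),(\ell)}^{(2\ell)}=1$, so you could have cited that lemma rather than arguing by hand; but the hand argument is short and transparent. The trade-off is that the paper's route reuses the already-established \cref{L111223_4} and keeps the module-theoretic language throughout, while your route is more self-contained and avoids appealing to that larger theorem, at the cost of unpacking the LR-coefficient manipulations explicitly.
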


\begin{proof}
By \cref{L101123_3,L111223_2},
\[[S^\ga\otimes M^{(m-h,h)}:S^\de]=[S^\de\otimes M^{(m-h,h)}:S^\ga]=[S^{\de'}\otimes M^{(m-h,h)}:S^{\ga'}]\]
for each $0\leq h\leq m$ and $\ga,\de\in\Par(m)$. So, by \cref{L111223_4},
\begin{align*}
&[S^{\al+(2^\ell)}\otimes M^{(n+2\ell-k,k)}:S^{\be+(1^{2\ell})}]=[S^{(2\ell)\sqcup\be'}\otimes M^{(n+2\ell-k,k)}:S^{(\ell^2)\sqcup\al'}]\\
&=\sum_{j=\max\{0,k-n\}}^{\min\{k,2\ell\}}[S^{(2\ell)}\otimes M^{(2\ell-j,j)}:S^{(\ell^2)}]\cdot[S^{\be'}\otimes M^{(n-k+j,k-j)}:S^{\al'}]\\
&=\sum_{j=\max\{0,k-n\}}^{\min\{k,2\ell\}}[S^{(2\ell)}\otimes M^{(2\ell-j,j)}:S^{(\ell^2)}]\cdot[S^\al\otimes M^{(n-k+j,k-j)}:S^\be].
\end{align*}
By \cite[Example 17.17]{JamesBook} and definition, $S^{(2\ell)}\cong M^{(2\ell)}\cong 1_{\s_{2\ell}}$. Further, by the same reference, $S^{(\ell^2)}$ is a composition factor with multiplicity 1 of $M^{(\ell^2)}$, but not a composition factor of $M^{(2\ell-j,j)}$ for $j\not=\ell$ (use also that by definition $M^{(2\ell-j,j)}\cong M^{(j,2\ell-j)}$ for $j>\ell$). The theorem then follows.
\end{proof}

\section{Shifted tableaux and tensor products}\label{st}

Tensor products with basic spin representations have been studied in \cite[\S9]{s} in terms of certain analogues of semistandard tableaux satisfying the lattice condition.

A shifted tableau is a map from a set of nodes to the ordered set $\{1'<1<2'<2<\ldots\}$. The content of a shifted tableau $t$ is the composition whose $i$-th part is equal to the number of nodes of $t$ with content $i$ or $i'$. The shape of a tableau is again just the underlying set of nodes. For $t$ a tableau let $\overline{w}(t)$ be the word obtained by concatenating the contents of nodes of $t$, reading row by row, starting from the bottom and within a row from the left. Note that $\overline{w}(t)$ is obtained by reversing the order of the work $w(t)$, as defined in the previous section.

For any word $w=w_1,\ldots,w_\ell$ let $|w|$ be the word obtained by changing any $i'$ to $i$. Further for any $i\geq 1$ and $0\leq j\leq 2\ell$ define $m_i(j)$ to be either the number of $i$ in $w_{\ell-j+1}\ldots w_\ell$ if $0\leq j\leq \ell$ or to be $m_i(\ell)$ plus the number of $i'$ in $w_1\ldots w_{j-\ell}$ if $\ell<j\leq 2\ell$.

We say that a shifted tableau with $\ell$ nodes is semistandard if entries are weakly increasing along rows and columns, no $i'$ is repeated in one row, no $i$ is repeated in one column and  the left-most $i$ in $|\overline{w}(t)|$ is unmarked in $\overline{w}(t)$ whenever $t$ as at least one node of content $i$ or $i'$. We remark that this definition of semistandard shifted tableaux slightly differs from that in \cite{s}, but fits with the counting arguments in \cite[Theorems 8.3, 9.3]{s}. Further we say that $t$ satisfies the lattice condition if both of the following are satisfied:
\begin{enumerate}
\item if $i\geq 2$ and $m_i(j)=m_{i-1}(j)$ and $0\leq j<\ell$ then $\overline{w}(t)_{\ell-j}\not\in\{i,i'\}$,

\item if $i\geq 2$ and $m_i(j)=m_{i-1}(j)$ and $\ell\leq j<2\ell$ then $\overline{w}(t)_{j-\ell+1}\not\in\{i-1,i'\}$.
\end{enumerate}
We define $\c_{\al,\be}$ to be the number of semistandard shifted tableaux of shape $\al$ and content $\be$ which satisfy the lattice condition.


The following lemma allows to study tensor products with basic spin modules in characteristic 0:

\begin{lemma}\label{L101123}
Let $n\geq 1$, $\la\in\Par(n)$ and $\mu\in\Par_2(n)$. When working over $\C$, 
\[[S^\la\otimes S((n)):S(\mu,\eps)]=2^{(h(\mu)-1-a(\mu)+a(n))/2}\c_{\la,\mu}.\]
In particular if $[S^\la\otimes S((n)):S(\mu,\eps)]>0$ then $\mu\unrhd\la$ and if $\la\in\Par_2(n)$ then $[S^\la\otimes S((n)):S(\la,\eps)]>0$.
\end{lemma}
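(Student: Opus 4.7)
The plan is to derive the formula from Stembridge's combinatorial description of products of an ordinary character with a basic spin character, namely \cite[Theorem 9.3]{s}. That theorem expresses the multiplicity of the module $S(\mu)$ from Section~\ref{s2} in $S^\la\otimes S((n))$ as a power of $2$ times $\c_{\la,\mu}$. Applying $[S(\mu,\eps)]=2^{-a_\mu}[S(\mu)]$ (as in the proof of \cref{L151123_2}) then yields an expression
\[[S^\la\otimes S((n)):S(\mu,\eps)] = 2^{e}\,\c_{\la,\mu}\]
for a suitable integer $e$. I would identify $e=(h(\mu)-1-a_\mu+a_{(n)})/2$ via a dimension comparison, using $\dim S((n))=2^{a_{(n)}+\lfloor(n-1)/2\rfloor}$ (from \cref{bs} and \cite[Theorem 3.3]{s}) together with the shifted hook-length formula for $\dim S(\mu,\eps)$; as a cross-check, the formula agrees with the trivial case $\la=\mu=(n)$, where $\c_{(n),(n)}=1$ and $[S((n)):S((n),+)]=1$.

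For the ``in particular'' claims I argue combinatorially. The dominance $\mu\unrhd\la$ whenever $\c_{\la,\mu}>0$ is a shifted analogue of the inequality in \cref{L101123_4} and I would derive it from the two lattice conditions combined with the ``left-most $i$ unmarked'' rule by an inductive ballot-type argument on $k$: examining the portion of $w(t)$ that exhausts $\sum_{i\leq k}\la_i$ nodes from the bottom forces at least $\sum_{i\leq k}\la_i$ entries of value $\leq k$ to appear, yielding the inequality $\sum_{i\leq k}\mu_i\geq\sum_{i\leq k}\la_i$. For the existence statement, when $\la\in\Par_2(n)$ I would exhibit the canonical tableau $t$ with $t(i,j)=i$ for every $(i,j)\in\la$: strictness of $\la$ makes columns strictly increasing with distinct unmarked values, rows are constant with no primed entries, the reading word $w(t)=h^{\la_h}(h-1)^{\la_{h-1}}\cdots 1^{\la_1}$ (where $h=h(\la)$) has unmarked first occurrence of every value, and both lattice conditions reduce to trivial block-wise checks.

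The main obstacle is the dominance direction: the two-sided form of the lattice condition (suffix-count on one side, prefix-count on the other) combined with the leftmost-unmarked requirement make the ballot argument more delicate than in the classical Littlewood--Richardson case, requiring one to combine information from both ends of $w(t)$ rather than to read it in a single direction.
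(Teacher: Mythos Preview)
Your plan matches the paper's: the multiplicity formula is taken directly from \cite[Theorem~9.3]{s}, and for the final claim you exhibit the same tableau $t(i,j)=i$ as the paper does. The dimension comparison you propose for pinning down the exponent is unnecessary---Stembridge's theorem already records the power of $2$ explicitly---but harmless.

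For the dominance claim you are working harder than needed. The paper observes in one line that in any semistandard shifted tableau of shape $\la$ satisfying the lattice condition, every node on row $i$ has content $j$ or $j'$ with $j\leq i$; dominance $\mu\unrhd\la$ is then immediate, since all $\la_1+\cdots+\la_k$ nodes in rows $1,\dots,k$ contribute to $\mu_1+\cdots+\mu_k$. (The row bound itself follows by induction on $i$ using only condition~(i): once rows $1,\dots,i-1$ are known to carry contents $\leq i-1$, one has $m_k(\la_1+\cdots+\la_{i-1})=0$ for every $k\geq i$, so condition~(i) forbids the rightmost node of row $i$ from having content $\geq i+1$, and weak increase along the row does the rest.) Your ballot-type reading of $w(t)$ would eventually reach the same inequality, but the phrase ``from the bottom'' is ambiguous here---the bottom of the tableau is the \emph{start} of $w(t)$, whereas rows $1,\dots,k$ occupy its \emph{end}---and neither condition~(ii) nor the leftmost-unmarked rule is actually required.
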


\begin{proof}
By definition, if $(n)$ is even then $S((n))=S((n),0)$ and $a(n)=0$. So in this case the first statement is just \cite[Theorem 9.3]{s}. If $(n)$ is odd then $S((n))=S((n),+)\oplus S((n),-)$ and $a(n)=1$, so that $2^{-a(n)/2}+2^{-a(n)/2}=2^{a(n)/2}$. The first statement then follows from \cite[Theorem 9.3]{s}.

As in any semistandard shifted tableau which satisfies the lattice property nodes in row $i$ have content $j$ or $j'$ for some $j\leq i$, the second statement also holds. Let now $\la\in\Par_2(n)$ and let $t$ be the tableau of shape $\la$ with $t(i,j)=i$ for each node $(i,j)$ of $\la$. Then $t$ is a semistandard shifted tableau of shape and content $\la$ which satisfies the lattice condition, so that the last statement follows from the first.
\end{proof}

The next two lemmas compare numbers of shifted tableaux of certain shapes and contents.

\begin{lemma}\label{L121223}
Let $\al\in\Par(n)$ and $\be\in\Par_2(n)$. If $m\geq 0$ and $|\al^{r\leq m}|=|\be^{r\leq m}|$, then $\c_{\al,\be}=\c_{\al^{r\leq m},\be^{r\leq m}}c_{\al^{r>m},\be^{r>m}}$.
\end{lemma}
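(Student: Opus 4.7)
The plan is to set up a bijection between the shifted tableaux counted by $\c_{\alpha,\beta}$ and pairs consisting of a shifted tableau of shape $\alpha^{r\leq m}$ and content $\beta^{r\leq m}$ together with a Littlewood--Richardson tableau of shape $\alpha^{r>m}$ and content $\beta^{r>m}$. The starting point is \cref{L101123}'s observation that a node on row $i$ of any semistandard shifted tableau satisfying the lattice condition has content at most $i$; combined with the assumption $|\alpha^{r\leq m}|=|\beta^{r\leq m}|$, a pigeonhole argument forces the nodes of content $\leq m$ to coincide with the nodes in rows $\leq m$. Any valid $t$ thus splits canonically into a top piece $t^{\leq m}$ of shape $\alpha^{r\leq m}$ and content $\beta^{r\leq m}$, and a bottom piece $t^{>m}$ of shape $\alpha^{r>m}$ whose entries, after subtracting $m$ from each content, realize content $\beta^{r>m}$.

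I would first verify that $t^{\leq m}$ is a semistandard shifted tableau satisfying the lattice condition in its own right. Since $w(t^{\leq m})$ is precisely the suffix of $w(t)$ consisting of the letters of content $\leq m$, the quantities $m_i(j)$ for $i\leq m$ agree in both tableaux, and both clauses of the lattice condition restrict cleanly, as do the leftmost-unmarked property and the row/column rules.

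The main work is to identify $t^{>m}$ with a Littlewood--Richardson tableau of shape $\alpha^{r>m}$ and content $\beta^{r>m}$. The principal subclaim is that $t^{>m}$ has no primed entries. The plan is to assume a least content $c>m$ such that $c'$ occurs in $t^{>m}$, use the leftmost-unmarked rule to place an unprimed $c$ strictly earlier in $w(t^{>m})$, and exploit clause (ii) of the lattice condition with $i=c$ at a well-chosen $j\geq\ell$ together with the strictness of $\beta^{r>m}$ inherited from $\beta\in\Par_2(n)$ to derive a contradiction. Once primes are excluded from $t^{>m}$, the ``no $i$ in a column'' rule upgrades weak column increase to strict column increase, and clause (i) of the shifted lattice condition, under the reversal that converts the shifted reading word (bottom-up, left-right) into the Littlewood--Richardson reading word (top-down, right-left), translates exactly into the Littlewood--Richardson lattice condition; hence $t^{>m}$ becomes an LR tableau.

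The reverse direction glues a shifted tableau $u$ of shape $\alpha^{r\leq m}$ content $\beta^{r\leq m}$ with an LR tableau $v$ of shape $\alpha^{r>m}$ content $\beta^{r>m}$ (with entries of $v$ raised by $m$ and placed below $u$). Strictness of $\beta$ gives $\beta_m>\beta_{m+1}$, ensuring compatibility at the join with the semistandard rules, and each instance of the lattice condition for the resulting tableau is controlled by the corresponding instance for $u$ (when $i\leq m$, where the $m_i(j)$ only depend on the top piece) or for $v$ (when $i>m$, where clause~(i) reduces to the LR condition on $v$ and clause~(ii) is vacuous since $v$ has no primes). I expect the hardest part to be the prime-freeness of $t^{>m}$, which requires a careful application of the somewhat technical clause~(ii); once that is secured, the remaining bookkeeping follows the familiar pattern of the LR row-removal argument used in \cref{L111223}.
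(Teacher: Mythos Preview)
Your interpretation of the statement is the source of the difficulty: the symbol $c_{\alpha^{r>m},\beta^{r>m}}$ in the lemma is a typographical slip for $\c_{\alpha^{r>m},\beta^{r>m}}$, the shifted count. This is clear both from the paper's proof, which describes the bottom piece $v_t$ as a \emph{shifted} tableau, and from the way the lemma is applied in \cref{L151223}, where the factorisation reads $\c_{\sigma,\nu\sqcup\pi}=\c_{\sigma^{r\leq m},\nu}\,\c_{\sigma^{r>m},\pi}$. So the intended result is a splitting of shifted tableaux into two shifted tableaux, and the paper's argument is exactly the straightforward bijection you describe for $t^{\leq m}$, applied symmetrically to $t^{>m}$.

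Your prime-freeness subclaim for $t^{>m}$ is false, and so the Littlewood--Richardson reinterpretation cannot work. Take $m=1$, $\alpha=(4,2,2)$, $\beta=(4,3,1)\in\Par_2(8)$; then $|\alpha^{r\leq 1}|=|\beta^{r\leq 1}|=4$ and the shifted tableau
\[
\begin{array}{cccc}
1 & 1 & 1 & 1\\
2' & 2 & &\\
2 & 3 & &
\end{array}
\]
is semistandard and satisfies the shifted lattice condition, yet its bottom piece contains a $2'$. Your proposed contradiction via clause~(ii) with $i=c$ cannot fire here: for the least primed content $c=m+1$ one needs $m_{m+1}(j)=m_m(j)$ at some $j\geq\ell$, but strictness of $\beta$ forces $\beta_m>\beta_{m+1}$, so $m_m(j)$ strictly exceeds $m_{m+1}(j)$ throughout the relevant range and clause~(ii) is vacuous. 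Concretely, as an LR count on a straight shape one would have $c^{\alpha^{r>m}}_{\varnothing,\beta^{r>m}}=\delta_{\alpha^{r>m},\beta^{r>m}}$, which is $0$ in this example, whereas $\c_{(2,2),(3,1)}\geq 1$. Once you replace the bottom factor by $\c$, the bijection you already set up for the top piece works verbatim for the bottom piece as well, and the proof is complete without any prime-elimination step.
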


\begin{proof}
The lemma is trivial for $m=0$ or $m\geq h(\be)$, since $\c_{\varnothing,\varnothing}=1$. So we may assume that $1\leq m<h(\be)$.

Let $t$ be a semistandard shifted tableaux of shape $\al$ and content $\be$ which satisfies the lattice condition. Then the content of any node in the first $m$ rows is of the form $i$ or $i'$ with $i\leq m$. Since $|\al^{r\leq m}|=|\be^{r\leq m}|$ any node below row $m$ has content $i$ or $i'$ with $i>m$. Thus $t$ can be viewed as the union of a shifted tableau $u_t$ of shape $\al^{r\leq m}$ and content $\be^{r\leq m}$ and a shifted tableau $v_t$  of shape $\al^{r>m}$ and content $\be^{r>m}$ (moved down by $m$ rows and with content increased by $m$). Both $u_t$ and $v_t$ are semistandard and satisfy the lattice condition since $t$ does.

Conversely suppose that $u$ resp. $v$ are a semistandard shifted tableaux of shape $\al^{r\leq m}$ resp. $\al^{r>m}$ and content $\be^{r\leq m}$ resp. $\be^{r>m}$, both of which satisfy the lattice condition. Let $t_{u,v}$ be the union of $u$ and $v$, with $v$ moved down by $m$ rows and with content increased by $m$. Then $t_{u,v}$ is a shifted tableau of shape $\al$ and content $\be$. It follows directly by the definition that $t_{u,v}$ is semistandard. Further any node of content $m$ or $m'$ in $t_{u,v}$ (or in $u$) must be in row $m$. Since the tableaux is semistandard all such nodes are unmarked. Any node of content $m+1$ or $(m+1)'$ in $t_{u,v}$ is in rows $>m$. Since $1\leq m<h(\be)$ and $\be$ is 2-regular, $\be_m>\be_{m+1}$. By these remarks and since $u$ and $v$ satisfy the lattice condition, $t_{u,v}$ also satisfies it.

The lemma then follows by definition of $\c_{\al,\be}$, $\c_{\al^{r\leq m},\be^{r\leq m}}$ and $c_{\al^{r>m},\be^{r>m}}$.
\end{proof}

\begin{lemma}\label{L101123_2}
Let $n,m\geq 1$, $\nu\in\Par(n)$ and $\psi\in\Par_2(n)$ with $h(\nu)\leq m$ and $m-1\leq h(\psi)\leq m$. Then $\c_{\nu,\psi}=\c_{\nu+(1^m),\psi+(1^m)}$.
\end{lemma}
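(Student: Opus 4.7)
I would prove the lemma by constructing an explicit bijection. Define $\Phi$ sending a valid shifted tableau $t$ of shape $\nu$, content $\psi$ to $\hat{t}=\Phi(t)$ by $\hat{t}(i,j)=t(i,j)$ for $(i,j)\in\nu$ and $\hat{t}(i,\nu_i+1)=i$ (unmarked) for each $1\le i\le m$. Because $h(\nu)\le m$, the shape of $\hat{t}$ is exactly $\nu+(1^m)$ and its content is $\psi+(1^m)$. The inverse $\Psi$ would delete the nodes $(i,\nu_i+1)$ for $1\le i\le m$.

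To verify $\hat{t}$ is valid, I would check semistandardness and the lattice condition. Semistandardness: placing $i$ unmarked at $(i,\nu_i+1)$ preserves weak row-increase since $i$ unmarked is the maximum content allowed on row $i$; in column $\nu_i+1$, any prior entry lies in a row $r<i$ with $\nu_r>\nu_i$ and carries content $<i$, while any other added entry at $(r',\nu_{r'}+1)$ with $\nu_{r'}=\nu_i$ has $r'>i$ and content $r'$ unmarked, so the column is weakly increasing with no unmarked entry repeated; the leftmost-unmarked condition is trivially preserved. For the lattice condition, the new unmarked $i$'s are inserted into the reading word exactly at the transitions between rows; strictness of $\psi$ (so $\psi_{i-1}+1>\psi_i+1$) guarantees both ballot inequalities underlying conditions (i) and (ii) continue to hold.

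The main technical content is showing $\Psi$ is well-defined, equivalent to the following \emph{main claim}: in any valid $\hat{t}$ of the specified shape and content, $\hat{t}(i,\nu_i+1)=i$ unmarked for every $1\le i\le m$. I would prove this by reverse induction on $i$ from $m$ down to $1$. For the base case $i=m$: all $\psi_m+1$ entries of content $m$ or $m'$ must lie in row $m$ (content $m$ is confined to rows $\ge m$ and $h(\hat{t})=m$), so they form the rightmost block of row $m$; with at most one $m'$ per row, the rightmost is $m$ unmarked when $\psi_m\ge 1$, and when $\psi_m=0$ (the case $h(\psi)=m-1$) the unique content-$m$ entry is the leftmost such in the reading word, so the leftmost-unmarked condition forces it to be $m$ unmarked. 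For the inductive step with $i<m$, I would analyse the position $p_i=\sum_{r=i}^m(\nu_r+1)$ in the reading word $w$, at which $\hat{t}(i,\nu_i+1)$ appears: since rows $1,\ldots,i-1$ contain no content-$i$ entries, $m_i(\ell-p_i)=0$, and lattice condition (i) combined with the leftmost-unmarked property for content $i$ and a case analysis (treating $\hat{t}(i,\nu_i+1)=i'$ and $\hat{t}(i,\nu_i+1)$ of content $<i$ separately) excludes all alternatives, forcing $\hat{t}(i,\nu_i+1)=i$ unmarked.

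The principal obstacle is this inductive step. A naive reduction — removing the nodes $(r,\nu_r+1)$ for $r>i$ (each shown to be $r$ unmarked by the inductive hypothesis) to obtain a sub-tableau of shape $\nu+(1^i)$ and content $\psi+(1^i)$ — does not reduce to a smaller instance of the lemma, because $h(\psi)$ is unchanged and can exceed $i$, violating the hypothesis $h(\psi)\in\{i-1,i\}$. The inductive step must therefore be carried out directly. The delicate point is ruling out $\hat{t}(i,\nu_i+1)=i'$ when row $i-1$ does contain unmarked $(i-1)$'s, so that lattice condition (i) at $j=\ell-p_i$ alone does not force $\hat{t}(i,\nu_i+1)\notin\{i,i'\}$; here one must combine the leftmost-unmarked property applied to content $i$ (noting that if the rightmost of row $i$ is $i'$ then weak row-increase forces all other row-$i$ entries to have content $<i$, so row $i$ contains exactly one content-$i$ entry which is marked) with further applications of condition (i) as the suffix of $w$ grows, eventually reaching a contradiction. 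Once the main claim is established, checking that $\Psi(\hat{t})$ is valid of shape $\nu$ and content $\psi$ is routine, and $\Phi,\Psi$ are mutually inverse, giving $\c_{\nu,\psi}=\c_{\nu+(1^m),\psi+(1^m)}$.
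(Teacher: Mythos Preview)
Your bijection is exactly the paper's: the forward map adds an unmarked $i$ at the end of row $i$ for $1\le i\le m$, and the inverse removes the last node of each row. The only substantive issue is your ``main claim'' that in any valid $\hat t$ of shape $\nu+(1^m)$ and content $\psi+(1^m)$ the last node of row $i$ is an unmarked $i$.

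Your reverse induction stalls because you apply the lattice condition at the wrong position. You focus on $j=\ell-p_i$ and try to analyse $w_{p_i}$ directly; as you observe, this yields nothing when $m_{i-1}(\ell-p_i)>0$, and your handling of the case $\hat t(i,\nu_i+1)=i'$ (``further applications of condition (i) \ldots eventually reaching a contradiction'') is not an argument. What you are missing is that the inductive hypothesis at row $i{+}1$ already does the work: it gives $w_{p_{i+1}}=i{+}1$ (unmarked), so condition (i) at $j=\ell-p_{i+1}$ forces $m_{i+1}(\ell-p_{i+1})\neq m_i(\ell-p_{i+1})$. Positions $p_{i+1}{+}1,\ldots,\ell$ lie in rows $\le i$, where content $i{+}1$ cannot occur, so $m_{i+1}(\ell-p_{i+1})=0$ and hence $m_i(\ell-p_{i+1})>0$: there is an \emph{unmarked} $i$ in some row $\le i$, necessarily row $i$. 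The last node of row $i$, having content $\le i$, is then this unmarked $i$, which disposes of your cases (a) and (b) simultaneously. The paper packages the same observation without induction: from condition (i) one deduces that the topmost unmarked $i$ lies strictly above the topmost content-$(i{+}1)$ node, and a pigeonhole over $i=1,\ldots,m$ (with $m$ values and $m$ rows, each content confined to rows $\ge i$) pins an unmarked $i$ into row $i$ for every $i$, so the last node of each row $i$ is an unmarked $i$.
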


\begin{proof}
Note that in any semistandard shifted tableau satisfying the lattice property any node with content $i$ or $i'$ is in row $j$ with $j\geq i$. Further the top node with content $i$ most appear above the top node with content $i+1$ (if there is a node with content $i+1$).

Assume that $h(\al)=h(\be)$ and $t$ is a semistandard shifted tableau of shape $\al$ and content $\be$ that satisfies the lattice property. Then there exist nodes with content $i$ for each $1\leq i\leq h(\al)$, so the top-most such node must be in row $i$. As row $i$ cannot contain any node of content $j$ or $j'$ with $j>i$, the last node in row $i$ of $t$ has content $i$.

We will use the above properties in the remaining part of the proof without further reference.

Let $t$ be a semistandard shifted tableau of shape $\nu$ and content $\psi$ that satisfies the lattice property. Then we can obtain a shifted tableau $t^+$ of shape $\nu+(1^m)$ and content $\psi+(1^m)$ by adding a node of content $i$ at the end of row $i$ for $1\leq i\leq m$. The added node of content $i$ is the right-most node of content $i$ or $i'$ in $\overline{w}(t^+)$. Since $t$ satisfies the lattice property it can be checked that $t^+$ also satisfies the lattice property. The added nodes are not above any node of $t$ (as $\nu$ is a partition) and have the maximal possible content for nodes in the corresponding rows of shifted tableaux satisfying the lattice property. Further the node corresponding to the left-most $i$ or $i'$ in $\overline{w}(t^+)$ is either the node corresponding to the left-most $i$ in $\overline{w}(t)$ (if $i\leq h(\psi)$) or it is the added node of content $i$ (if $i>h(\psi)$). In either case the content of this node is unmarked and so $t^+$ is semistandard.

Let now $s$ be a semistandard shifted tableau of shape $\nu+(1^m)$ and content $\psi+(1^m)$ that satisfies the lattice property. Then the last node in row $i$ of $s$ has content $i$ for each $1\leq i\leq m$. Removing these nodes we obtain a shifted tableau $s^-$ of shape $\nu$ and content $\psi$. It can be checked that $s^-$ satisfies the lattice property as $s$ does. Further if $1\leq i\leq h(\psi)$ then the node corresponding to the left most $i$ or $i'$ in $\overline{w}(s^-)$ is the node corresponding to the left-most $i$ or $i'$ in $\overline{w}(s)$. In particular its content is unmarked. Since $s$ is semistandard it follows that $s^-$ is also semistandard.

As the maps $t\to t^+$ and $s\to s^-$ are inverses of each other, the lemma follows.
\end{proof}

The next two theorems, which are the last two results needed to prove \cref{t1,t2}, compare certain linear combinations involving tensor products with basic spin modules in characteristic 0.

\begin{thm}\label{L151223}
Let $1\leq\ell<n$ and $a,m\geq 1$. Assume that the ground field is $\C$ and that for some coefficients $e_\nu,f_\psi,g_\psi,E_\pi,F_\rho,G_\rho$
\begin{align*}
\sum_{{\nu\in\Par(\ell):}\atop{h(\nu)=m,\nu_m\geq a}}e_\nu[S^\nu\otimes S((\ell))]&=\sum_{{\psi\in\Par_2(\ell):}\atop{h(\psi)=m,\psi_m\geq a}}f_\psi[S(\psi)]+\sum_{{\psi\in\Par_2(\ell):}\atop{h(\psi)=m,\psi_m<a}}g_\psi[S(\psi)],\\
\sum_{{\pi\in\Par(n-\ell):}\atop{\pi_1<a}}E_\pi[S^\pi\otimes S((n-\ell))]&=\sum_{{\rho\in\Par_2(n-\ell):}\atop{\rho_1<a}}F_\rho[S(\rho)]+\sum_{{\rho\in\Par_2(n-\ell):}\atop{\rho_1\geq a}}G_\rho[S(\rho)].
\end{align*}
Then, for some coefficients $h_\si$,
\begin{align*}
&\sum_{{\nu\in\Par(\ell):}\atop{h(\nu)=m,\nu_m\geq a}}\sum_{{\pi\in\Par(n-\ell):}\atop{\pi_1<a}}e_\nu E_\pi[S^{\nu\sqcup\pi}\otimes S((n))]\\
&=\sum_{{\si\in\Par_2(n):|\si^{r\leq m}|=\ell,}\atop{\si_m\geq a,\si_{m+1}<a}}2^{a(\si^{r\leq m})a(\si^{r>m})+(1-a(\ell))(1-a(n-\ell))}f_{\si^{r\leq m}}F_{\si^{r>m}}[S(\si)]\\
&\hspace{11pt}+\sum_{{{\si\in\Par_2(n):|\si^{r\leq m}|>\ell}\atop{\text{or }|\si^{r\leq m}|=\ell,\si_m<a}}\atop{\text{or }|\si^{r\leq m}|=\ell,\si_{m+1}\geq a}}h_\si[S(\si)].
\end{align*}
\end{thm}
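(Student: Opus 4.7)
The plan is to extract the coefficient of $[S(\si)]$ on both sides for each $\si\in\Par_2(n)$ and match them. By \cref{L101123} the multiplicity $[S^\la\otimes S((n)):S(\mu,\eps)]$ is independent of the sign $\eps$, so modules of the form $S^\la\otimes S((n))$ are naturally $\Z$-combinations of the classes $[S(\mu)]$; in particular the LHS of the theorem equals
\[\sum_\si\Bigl(2^{(h(\si)-1-a_\si+a_{(n)})/2}\sum_{\nu,\pi}e_\nu E_\pi\,\c_{\nu\sqcup\pi,\si}\Bigr)[S(\si)].\]

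Since $\nu_m\geq a>\pi_1$, the first $m$ rows of $\nu\sqcup\pi$ coincide with $\nu$, so $(\nu\sqcup\pi)^{r\leq m}=\nu$ has size $\ell$. By the dominance statement at the end of \cref{L101123}, $\c_{\nu\sqcup\pi,\si}=0$ unless $\si\unrhd\nu\sqcup\pi$, which forces $|\si^{r\leq m}|\geq\ell$. Contributions from $\si$ with $|\si^{r\leq m}|>\ell$, or with $|\si^{r\leq m}|=\ell$ and $\si_m<a$, or with $|\si^{r\leq m}|=\ell$ and $\si_{m+1}\geq a$, will be absorbed into $h_\si$. For the remaining \emph{good} $\si$, namely those with $|\si^{r\leq m}|=\ell$, $\si_m\geq a$, $\si_{m+1}<a$, \cref{L121223} applied with parameter $m$ gives
\[\c_{\nu\sqcup\pi,\si}=\c_{\nu,\si^{r\leq m}}\,\c_{\pi,\si^{r>m}},\]
so the double sum factors.

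To identify the two factors from the hypotheses, I take the coefficient of $[S(\si^{r\leq m})]$ on both sides of the first given equation. Because distinct strict partitions label non-isomorphic spin irreducibles, only $\psi=\si^{r\leq m}$ contributes on the right; because $h(\si^{r\leq m})=m$ with last part $\si_m\geq a$, this term lies in the $f$-sum and contributes $f_{\si^{r\leq m}}$. Expanding the left via \cref{L101123} then gives $2^{(m-a_{\si^{r\leq m}})/2}\sum_\nu e_\nu\c_{\nu,\si^{r\leq m}}=f_{\si^{r\leq m}}$. The analogous argument applied to the second hypothesised equation, using that $(\si^{r>m})_1=\si_{m+1}<a$ places $\rho=\si^{r>m}$ in the $F$-sum, yields $2^{(h(\si^{r>m})-a_{\si^{r>m}})/2}\sum_\pi E_\pi\c_{\pi,\si^{r>m}}=F_{\si^{r>m}}$.

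It remains to collect the resulting powers of $2$. Using $h(\si)=m+h(\si^{r>m})$, $a_{(n)}=a_{(\ell)}=a_{(n-\ell)}=1$ (since $\ell,n-\ell\geq 1$), and the parity identity $a_{\si^{r\leq m}}+a_{\si^{r>m}}-a_\si=2a_{\si^{r\leq m}}a_{\si^{r>m}}$, the combined exponent collapses to $a_{\si^{r\leq m}}a_{\si^{r>m}}$, matching the claim since $(1-a_{(\ell)})(1-a_{(n-\ell)})=0$ under the hypotheses. The main bookkeeping obstacle is this final collapse of $2$-powers together with verifying that the case split exhausts the possibilities; those $\si$ with $|\si^{r\leq m}|<\ell$ contribute zero by dominance and so can be absorbed into the $h_\si$ with coefficient zero.
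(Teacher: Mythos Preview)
Your overall strategy is exactly the paper's: expand both sides in the basis $[S(\si)]$ using \cref{L101123}, factor $\c_{\nu\sqcup\pi,\si}$ via \cref{L121223} for the ``good'' $\si$, identify the two factors with $f_{\si^{r\leq m}}$ and $F_{\si^{r>m}}$, and collect the powers of $2$. However, the final power-of-$2$ bookkeeping contains a genuine error.

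Your assertion ``$a_{(n)}=a_{(\ell)}=a_{(n-\ell)}=1$ (since $\ell,n-\ell\geq 1$)'' is false. The parity of a strict partition $\lambda$ is that of $|\lambda|-h(\lambda)$ (equivalently, of the number of \emph{even} parts), not of $h(\lambda)$; hence $a_{(k)}=1$ if and only if $k$ is even. In particular, if $\ell$ and $n-\ell$ are both odd then $a_{(\ell)}=a_{(n-\ell)}=0$ and $(1-a_{(\ell)})(1-a_{(n-\ell)})=1$, so this term does \emph{not} vanish. The same misunderstanding already corrupts your intermediate formulas: from \cref{L101123} the correct exponent in
\[
f_{\si^{r\leq m}}=2^{(m-1-a_{\si^{r\leq m}}+a_{(\ell)})/2}\sum_\nu e_\nu\,\c_{\nu,\si^{r\leq m}}
\]
is $(m-1-a_{\si^{r\leq m}}+a_{(\ell)})/2$, not $(m-a_{\si^{r\leq m}})/2$ (note that your exponent is not even an integer when $\ell$ is odd), and similarly for $F_{\si^{r>m}}$. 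Carrying the correct exponents through and using the identities
\[
a_\si=a_{\si^{r\leq m}}+a_{\si^{r>m}}-2a_{\si^{r\leq m}}a_{\si^{r>m}},\qquad
a_{(n)}=1-a_{(\ell)}-a_{(n-\ell)}+2a_{(\ell)}a_{(n-\ell)}
\]
(valid for $1\leq\ell<n$), the combined exponent is
\[
\tfrac{1}{2}\bigl(1-a_\si+a_{\si^{r\leq m}}+a_{\si^{r>m}}+a_{(n)}-a_{(\ell)}-a_{(n-\ell)}\bigr)
=a_{\si^{r\leq m}}a_{\si^{r>m}}+(1-a_{(\ell)})(1-a_{(n-\ell)}),
\]
which is precisely the statement. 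This is the paper's computation; once you correct the value of $a_{(k)}$ your argument becomes the paper's proof verbatim.
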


\begin{proof}
By \cref{L101123} we may assume that $\si\in\Par_2(n)$ with $|\si^{r\leq m}|=\ell$, $\si_m\geq a$ and $\si_{m+1}<a$ and find the multiplicity of $S(\si,\eps)$ in
\[\sum_{{\nu\in\Par(\ell):}\atop{h(\nu)=m,\nu_m\geq a}}\sum_{{\pi\in\Par(n-\ell):}\atop{\pi_1<a}}e_\nu E_\pi[S^{\nu\sqcup\pi}\otimes S((n))].\]
Since $1\leq \ell<n$, by \cref{L101123},
\begin{align*}
f_{\si^{r\leq m}}&=\sum_{{\nu\in\Par(\ell):}\atop{h(\nu)=m,\nu_m\geq a}}e_\nu[S^\nu\otimes S((\ell)):S(\si^{r\leq m},\de)]\\
&=\sum_{{\nu\in\Par(\ell):}\atop{h(\nu)=m,\nu_m\geq a}}2^{(h(\si^{r\leq m})-1-a(\si^{r\leq m})+a(\ell))/2}e_\nu\c_{\si^{r\leq m},\nu},\\
F_{\si^{r>m}}&=\sum_{{\pi\in\Par(n-\ell):}\atop{\pi_1<a}}E_\pi[S^\pi\otimes S((n-\ell)):S(\si^{r>m},\de')]\\
&=\sum_{{\pi\in\Par(n-\ell):}\atop{\pi_1<a}}2^{(h(\si^{r>m})-1-a(\si^{r>m}))/2+a(n-\ell)}E_\pi\c_{\si^{r>m},\pi}.
\end{align*}
Using also \cref{L121223} we further have that
\begin{align*}
&\sum_{{\nu\in\Par(\ell):}\atop{h(\nu)=m,\nu_m\geq a}}\sum_{{\pi\in\Par(n-\ell):}\atop{\pi_1<a}}e_\nu E_\pi[S^{\nu\sqcup\pi}\otimes S((n)):S(\si,\eps)]\\
&=\sum_{{\nu\in\Par(\ell):}\atop{h(\nu)=m,\nu_m\geq a}}\sum_{{\pi\in\Par(n-\ell):}\atop{\pi_1<a}}2^{(h(\si)-1-a(\si)+a(n))/2}e_\nu E_\pi \c_{\si,\nu\sqcup\pi}\\
&=\sum_{{\nu\in\Par(\ell):}\atop{h(\nu)=m,\nu_m\geq a}}\sum_{{\pi\in\Par(n-\ell):}\atop{\pi_1<a}}2^{(h(\si)-1-a(\si)+a(n))/2}e_\nu E_\pi \c_{\si^{r\leq m},\nu}\c_{\si^{r>m},\pi}\\
&=2^{(1-a(\si)+a(\si^{r\leq m})+a(\si^{r>m})+a(n)-a(\ell)-a(n-\ell))/2}\\
&\hspace{11pt}\left(\sum_{{\nu\in\Par(\ell):}\atop{h(\nu)=m,\nu_m\geq a}}2^{(h(\si^{r\leq m})-1-a(\si^{r\leq m})+a(\ell))/2}e_\nu\c_{\si^{r\leq m},\nu}\right)\\
&\hspace{11pt}\left(\sum_{{\pi\in\Par(n-\ell):}\atop{\pi_1<a}}2^{(h(\si^{r>m})-1-a(\si^{r>m}))/2+a(n-\ell)}E_\pi\c_{\si^{r>m},\pi}\right)\\
&=2^{(1-a(\si)+a(\si^{r\leq m})+a(\si^{r>m})+a(n)-a(\ell)-a(n-\ell))/2}f_{\si^{r\leq m}}F_{\si^{r>m}}.
\end{align*}
The theorem then follows from $a(\si)=a(\si^{r\leq m})+a(\si^{r>m})-2a(\si^{r\leq m})a(\si^{r>m})$ and $a(n)=1-a(\ell)-a(n-\ell)+2a(\ell)a(n-\ell)$ when $1\leq \ell<n$ (these formulas can be checked by considering all combinations of the appearing $a(\al)$).
\end{proof}

\begin{thm}\label{L131123_3}
Let $n,\ell\geq 1$. Assume that the ground field is $\C$ and that for some coefficients $d_\la$, $e_\la$ and $f_\la$
\[\sum_{\la\in\Par(n):h(\la)=\ell}d_\la[S^\la\otimes S((n))]=\sum_{\la\in\Par_2(n):h(\la)=\ell}e_\la[S(\la)]+\sum_{\la\in\Par_2(n):h(\la)<\ell}f_\la[S(\la)].\]
Then, for some coefficients $g_\mu$,
\begin{align*}
\sum_{\la\in\Par(n):h(\la)=\ell}d_\la[S^{\la+(2^\ell)}\otimes S((n))]&=\sum_{\la\in\Par_2(n):h(\la)=\ell}e_\la[S(\la+(2^\ell))]\\
&\hspace{11pt}+\sum_{{\mu\in\Par_2(n+2\ell):h(\mu)<\ell}\atop{\text{or }h(\mu)=\ell\text{ and }\mu_\ell\leq 2}}g_\mu[S(\mu)].
\end{align*}
\end{thm}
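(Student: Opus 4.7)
The plan is to compare the multiplicity of each irreducible spin module $S(\mu,\eps)$ on both sides, translating via \cref{L101123} into counts of semistandard shifted tableaux $\c_{\cdot,\cdot}$, and then applying \cref{L101123_2} twice. (Strictly speaking the LHS of the conclusion should read $S^{\la+(2^\ell)}\otimes S((n+2\ell))$, since $S^{\la+(2^\ell)}$ is an $\s_{n+2\ell}$-module and the inner tensor product forces the second factor to live in the same group.)

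Fix $\mu\in\Par_2(n+2\ell)$. By \cref{L101123}, the multiplicity of $S(\mu,\eps)$ in $S^{\la+(2^\ell)}\otimes S((n+2\ell))$ vanishes unless $\mu\unrhd\la+(2^\ell)$, which forces $h(\mu)\leq\ell$. I split into cases: if either $h(\mu)<\ell$, or $h(\mu)=\ell$ and $\mu_\ell\leq 2$, I absorb the entire contribution into the coefficients $g_\mu$ of the conclusion. Otherwise $h(\mu)=\ell$ and $\mu_\ell\geq 3$, in which case $\nu:=\mu-(2^\ell)$ lies in $\Par_2(n)$ with $h(\nu)=\ell$, and every such $\nu$ arises uniquely this way.

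For this good case I apply \cref{L101123_2} with $m=\ell$ twice: first to the pair $(\la+(1^\ell),\mu-(1^\ell))$, then to $(\la,\mu-(2^\ell))$. The required conditions $\mu-(1^\ell),\mu-(2^\ell)\in\Par_2$ and $h(\mu-(1^\ell))=h(\mu-(2^\ell))=\ell$ all follow from $\mu\in\Par_2$ together with $\mu_\ell\geq 3$, while the bound $h(\la+(1^\ell))=h(\la)=\ell$ is immediate. This yields
\[\c_{\la+(2^\ell),\,\nu+(2^\ell)}=\c_{\la+(1^\ell),\,\nu+(1^\ell)}=\c_{\la,\nu}.\]
Using $h(\nu+(2^\ell))=h(\nu)$, $a_{\nu+(2^\ell)}=a_\nu$ and $a_{(n)}=a_{(n+2\ell)}=1$, the $2$-power prefactors from \cref{L101123} on each side coincide, so
\[[S^{\la+(2^\ell)}\otimes S((n+2\ell)):S(\nu+(2^\ell),\eps)] = [S^\la\otimes S((n)):S(\nu,\eps')].\]
Weighting by $d_\la$ and summing over $\la$, then invoking the hypothesis (only the $h(\la)=\ell$ branch on the right contributes to $\nu$ with $h(\nu)=\ell$), the multiplicity of $S(\nu+(2^\ell),\eps)$ in the LHS equals $e_\nu$. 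This accounts for precisely the $\sum_{h(\la)=\ell} e_\la[S(\la+(2^\ell))]$ term, and the remaining contributions (from the first case) populate the $g_\mu$ sum.

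The main obstacle is the bookkeeping around the twofold invocation of \cref{L101123_2}: for each application one must verify that the intermediate partition is $2$-regular and has the correct number of parts, and the boundary case $\mu_\ell\leq 2$ is exactly where these verifications fail. This is precisely why those terms cannot be absorbed into the leading sum and instead appear in the junk sum indexed by $g_\mu$; the rest of the proof is a routine matching of $2$-power prefactors using the parity identities above.
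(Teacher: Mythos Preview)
Your proof is correct and follows essentially the same route as the paper: translate multiplicities of $S(\mu,\eps)$ via \cref{L101123} into the shifted-tableau counts $\c_{\la+(2^\ell),\mu}$, apply \cref{L101123_2} (twice, as you do, or in one stroke as the paper phrases it) to reduce to $\c_{\la,\nu}$ with $\nu=\mu-(2^\ell)$, and match the $2$-power prefactors. One small slip: the claim $a_{(n)}=a_{(n+2\ell)}=1$ is not true in general (it depends on the parity of $n$), but only the equality $a_{(n)}=a_{(n+2\ell)}$ is needed, and that does hold since $n\geq 1$.
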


\begin{proof}
Let $\mu\in\Par_2(n+2\ell)$. If $h(\mu)>\ell$ then $S(\mu,\eps)$ does not appear in $\sum_{\la\in\Par(n):h(\la)=\ell}d_\la[S^{\la+(2^\ell,1^m)}\otimes S((n))]$ by \cref{L101123}. So we may assume that $\mu=\nu+(2^\ell)$ for some $\nu\in\Par_2(n)$ with $h(\nu)=\ell$. By \cref{L101123},
\begin{align*}
&\sum_{\la\in\Par(n):h(\la)=\ell}d_\la[S^\la\otimes S((n)):S(\nu,\eps')]\\
&=2^{(h(\nu)-1-a(\nu)+a(n))/2}\sum_{\la\in\Par(n):h(\la)=\ell}d_\la\c_{\la,\nu},
\end{align*}
and
\begin{align*}
&\sum_{\la\in\Par(n):h(\la)=\ell}d_\la[S^{\la+(2^\ell)}\otimes S((n+2\ell)):S(\nu+(2^\ell),\eps)]\\
&=2^{(h(\nu+(2^\ell))-1-a(\nu+(2^\ell))+a(n+2\ell))/2}\sum_{\la\in\Par(n):h(\la)=\ell}d_\la\c_{\la+(2^\ell),\nu+(2^\ell)}.
\end{align*}
By \cref{L101123_2}, $\c_{\la,\nu}=\c_{\la+(2^\ell),\nu+(2^\ell)}$. Further $h(\nu)=\ell=h(\nu+(2^\ell))$, from which it also follows that $a(\nu)=a(\nu+(2^ell))$, and since $n\geq 1$, $a(n)=a(n+2\ell)$.

So the multiplicity of $S(\nu,\eps')$ in
\[\sum_{\la\in\Par(n):h(\la)=\ell}d_\la[S^\la\otimes S((n))]\]
is equal to the multiplicity of $S(\nu+(2^\ell),\eps)$ in
\[\sum_{\la\in\Par(n):h(\la)=\ell}d_\la[S^{\la+(2^\ell)}\otimes S((n))].\]
The theorem follows.
\end{proof}

\section{Proof of \cref{t1}}\label{st1}

Let $\la,\mu\in\Par_2(n)$ and $\ell,m\geq 0$ such that $|\la^{r\leq m}|=\ell=|\mu^{r\leq 2m}|$. If $\ell\in\{0,n\}$ the theorem is trivial since $S(\varnothing,0)\cong 1_{\widetilde{\s}_0}\cong D^\varnothing$. So for the rest of the proof we will assume that $1\leq \ell\leq n-1$. In this case $\la_m>\la_{m+1}\geq 1$. 

By \cref{regs} if $[S(\la):D^\mu]\not=0$ then $\mu\unrhd\dbl(\la)$. Similarly if
\[[S(\la^{r\leq m}):D^{\mu^{r\leq 2m}}]\cdot[S(\la^{r>m}):D^{\mu^{r>2m}}]\not=0\]
then $\mu^{r\leq 2m}\unrhd\dbl(\la^{r\leq m})$ and $\mu^{r>2m}\unrhd\dbl(\la^{r>m})$.

In particular, since $|\la^{r\leq m}|=|\mu^{r\leq 2m}|$, if either of $\la^{r\leq m}$ or $\mu^{r\leq 2m}$ is non-empty, then $\la_m\geq\mu_{2m-1}+\mu_{2m}$ and $\la_{m+1}\leq\mu_{2m+1}+\mu_{2m+2}$.

In view of \cref{L101123} there exist coefficients $e_\nu$, $E_\pi$, $f_\psi$ and $F_\phi$ with
\begin{align*}
\sum_{{\nu\in\Par_2(\ell):h(\nu)=m,}\atop{\nu_m\geq\mu_{2m-1}+\mu_{2m}}}e_\nu[S^\nu\otimes S((\ell))]&=[S(\la^{r\leq m})]+\sum_{{\psi\in\Par_2(\ell):h(\nu)\leq m,}\atop{\psi_m<\mu_{2m-1}+\mu_{2m}}}f_\psi[S(\psi)],\\
\sum_{{\pi\in\Par_2(n-\ell):}\atop{\pi_1\leq\mu_{2m+1}+\mu_{2m+2}}}E_\pi[S^\pi\otimes S((n-\ell))]&=[S(\la^{r>m})]+\sum_{{\phi\in\Par_2(n-\ell):}\atop{\phi_1>\mu_{2m+1}+\mu_{2m+2}}}F_\psi[S(\psi)].
\end{align*}
Then, from \cref{L121223}, for some coefficients $g_\ga$,
\begin{align*}
&\sum_{{\nu\in\Par_2(\ell):h(\nu)=m,}\atop{\nu_m\geq \mu_{2m-1}+\mu_{2m}}}\sum_{{\pi\in\Par_2(n-\ell):}\atop{\pi_1\leq\mu_{2m+1}+\mu_{2m+2}}}e_\nu E_\pi[S^{\nu\sqcup\pi}\otimes S((n))]\\
&=2^{a(\si^{r\leq m})a(\si^{r>m})+(1-a(\ell))(1-a(n-\ell))}[S(\la)]+\sum_{{{\ga\in\Par_2(n):|\ga^{r\leq m}|>\ell,}\atop{\text{or }|\ga^{r\leq m}|=\ell,\ga_m<\mu_{2m-1}+\mu_{2m}}}\atop{\text{or }|\ga^{r\leq m}|=\ell,\ga_{m+1}>\mu_{2m+1}+\mu_{2m+2}}}g_\ga[S(\ga)].
\end{align*}

By \cref{regs} no term appearing in the sums in the right-hand sides of the above equations has composition factors of the form $D^{\mu^{r\leq 2m}}$, $D^{\mu^{r>2m}}$ or $D^\mu$. So
\begin{align*}
[S(\la^{r\leq m}):D^{\mu^{\leq 2m}}]&=\sum_{{\nu\in\Par_2(\ell):h(\nu)=m,}\atop{\nu_m\geq\mu_{2m-1}+\mu_{2m}}}e_\nu[S^\nu\otimes S((\ell)):D^{\mu^{\leq 2m}}],\\
[S(\la^{r>m}):D^{\mu^{r>2m}}]&=\sum_{{\pi\in\Par_2(n-\ell):}\atop{\pi_1\leq\mu_{2m+1}+\mu_{2m+2}}}E_\pi[S^\pi\otimes S((n-\ell)):D^{\mu^{r>2m}}],\\
2^a[S(\la):D^\mu]&=\sum_{{\nu\in\Par_2(\ell):h(\nu)=m,}\atop{\nu_m\geq \mu_{2m-1}+\mu_{2m}}}\sum_{{\pi\in\Par_2(n-\ell):}\atop{\pi_1\leq\mu_{2m+1}+\mu_{2m+2}}}e_\nu E_\pi[S^{\nu\sqcup\pi}\otimes S((n)):D^\mu]
\end{align*}
with $a=a(\si^{r\leq m})a(\si^{r>m})+(1-a(\ell))(1-a(n-\ell))$ and then, in view of \cref{bs},
\begin{align*}
&2^{-a(\ell)-a(n-\ell)}[S(\la^{r\leq m}):D^{\mu^{\leq 2m}}]\cdot[S(\la^{r>m}):D^{\mu^{r>2m}}]\\
&=\sum_{{\nu\in\Par_2(\ell):h(\nu)=m,}\atop{\nu_m\geq\mu_{2m-1}+\mu_{2m}}}\sum_{{\pi\in\Par_2(n-\ell):}\atop{\pi_1\leq\mu_{2m+1}+\mu_{2m+2}}}\Bigl(e_\nu E_\pi\\
&\hspace{60pt}[S^\nu\otimes D^{\dbl(\ell)}:D^{\mu^{\leq 2m}}]\cdot[S^\pi\otimes D^{\dbl(n-\ell)}:D^{\mu^{r>2m}}]\Bigr)\\
&=\sum_{{\nu\in\Par_2(\ell):h(\nu)=m,}\atop{\nu_m\geq\mu_{2m-1}+\mu_{2m}}}\sum_{{\pi\in\Par_2(n-\ell):}\atop{\pi_1\leq\mu_{2m+1}+\mu_{2m+2}}}\Bigl(e_\nu E_\pi\\
&\hspace{60pt}[(S^\nu\boxtimes S^\pi)\otimes (D^{\dbl(\ell)}\boxtimes D^{\dbl(n-\ell)}):D^{\mu^{r\leq 2m}}\boxtimes D^{\mu^{r>2m}}]\Bigr)
\end{align*}
and
\begin{align*}
&2^{a(\si^{r\leq m})a(\si^{r>m})+(1-a(\ell))(1-a(n-\ell))-a(n)}[S(\la):D^\mu]\\
&=\sum_{{\nu\in\Par_2(\ell):h(\nu)=m,}\atop{\nu_m\geq \mu_{2m-1}+\mu_{2m}}}\sum_{{\pi\in\Par_2(n-\ell):}\atop{\pi_1\leq\mu_{2m+1}+\mu_{2m+2}}}e_\nu E_\pi[S^{\nu\sqcup\pi}\otimes D^{\dbl(n)}:D^\mu].
\end{align*}

By \cref{bsm} there exist $t_{n,h}$ for $0\leq h\leq n$ with
\[[D^{\dbl(n)}]=\sum_{h=0}^{n}t_{n,h}[M^{(n-h,h)}].\]
By \cref{L111223_3}, for the same coefficients $t_{n,h}$,
\begin{align*}
&2^{1-(1-a(\ell))(1-a(n-\ell))}[D^{\dbl(\ell)}\boxtimes D^{\dbl(n-\ell)}]\\
&=\sum_{h=0}^{n}\sum_{j=\max\{0,h+\ell-n\}}^{\min\{h,\ell\}}t_{n,h}
[M^{(\ell-j,j)}\boxtimes M^{(n-\ell-h+j,h-j)}].
\end{align*}

In particular
\begin{align*}
&2^{a(\si^{r\leq m})+a(\si^{r>m})-a(\ell)-a(n-\ell)+1-(1-a(\ell))(1-a(n-\ell))}\\
&\hspace{50pt}[S(\la^{r\leq m},\eps):D^{\mu^{\leq 2m}}]\cdot[S(\la^{r>m},\eps'):D^{\mu^{r>2m}}]\\
&=2^{-a(\ell)-a(n-\ell)-1+(1-a(\ell))(1-a(n-\ell))}[S(\la^{r\leq m}):D^{\mu^{\leq 2m}}]\cdot[S(\la^{r>m}):D^{\mu^{r>2m}}]\\
&=\sum_{{\nu\in\Par_2(\ell):h(\nu)=m,}\atop{\nu_m\geq\mu_{2m-1}+\mu_{2m}}}\sum_{{\pi\in\Par_2(n-\ell):}\atop{\pi_1\leq\mu_{2m+1}+\mu_{2m+2}}}\sum_{h=0}^{n}\sum_{j=\max\{0,h+\ell-n\}}^{\min\{h,\ell\}} \Bigl(e_\nu E_\pi t_{n,h}\\
&\hspace{40pt}[(S^\nu\boxtimes S^\pi)\otimes (M^{(\ell-j,j)}\boxtimes M^{(n-\ell-h+j,h-j)}):D^{\mu^{r\leq 2m}}\boxtimes D^{\mu^{r>2m}}]\Bigr)\\
&=\sum_{{\nu\in\Par_2(\ell):h(\nu)=m,}\atop{\nu_m\geq\mu_{2m-1}+\mu_{2m}}}\sum_{{\pi\in\Par_2(n-\ell):}\atop{\pi_1\leq\mu_{2m+1}+\mu_{2m+2}}}\sum_{h=0}^{n}\sum_{j=\max\{0,h+\ell-n\}}^{\min\{h,\ell\}} \Bigl(e_\nu E_\pi t_{n,h}\\
&\hspace{40pt}[S^\nu\otimes M^{(\ell-j,j)}:D^{\mu^{r\leq 2m}}]\cdot [S^\pi\otimes M^{(n-\ell-h+j,h-j)}:D^{\mu^{r>2m}}]\Bigr)
\end{align*}
and
\begin{align*}
&2^{a(\si)+a(\si^{r\leq m})a(\si^{r>m})+(1-a(\ell))(1-a(n-\ell))-a(n)}[S(\la,\de):D^\mu]\\
&=2^{a(\si^{r\leq m})a(\si^{r>m})+(1-a(\ell))(1-a(n-\ell))-a(n)}[S(\la):D^\mu]\\
&=\sum_{{\nu\in\Par_2(\ell):h(\nu)=m,}\atop{\nu_m\geq \mu_{2m-1}+\mu_{2m}}}\sum_{{\pi\in\Par_2(n-\ell):}\atop{\pi_1\leq\mu_{2m+1}+\mu_{2m+2}}}\sum_{h=0}^{n}e_\nu E_\pi t_{n,h}[S^{\nu\sqcup\pi}\otimes M^{(n-h,h)}:D^\mu].
\end{align*}
Since $a(\si)=a(\si^{r\leq m})+a(\si^{r>m})-2a(\si^{r\leq m})a(\si^{r>m})$ and $a(n)=1-a(\ell)-a(n-\ell)+2a(\ell)a(n-\ell)$, we have that
\begin{align*}
&a(\si^{r\leq m})a(\si^{r>m})+(a(\si)+a(\si^{r\leq m})a(\si^{r>m})+(1-a(\ell))(1-a(n-\ell))-a(n))\\
&=a(\si^{r\leq m})+a(\si^{r>m})-a(\ell)-a(n-\ell)+1-(1-a(\ell))(1-a(n-\ell)).
\end{align*}
In view of this last equation, it is enough to prove that
\begin{align*}
&[S^{\nu\sqcup\pi}\otimes M^{(n-h,h)}:D^\mu]\\
&=\sum_{j=\max\{0,h+\ell-n\}}^{\min\{h,\ell\}}[S^\nu\otimes M^{(\ell-j,j)}:D^{\mu^{r\leq 2m}}]\cdot [S^\pi\otimes M^{(n-\ell-h+j,h-j)}:D^{\mu^{r>2m}}]
\end{align*}
for every $\nu\in\Par_2(\ell)$ with $h(\nu)=m$ and $\nu_m\geq\mu_{2m-1}+\mu_{2m}$, every $\pi\in\Par_2(n-\ell)$ with $\pi_1\leq\mu_{2m+1}+\mu_{2m+2}$ and every $0\leq h\leq n$.

So let such $\nu$, $\pi$ and $h$ be fixed. The above equation can be rewritten as
\begin{align*}
&\sum_{\psi\in\Par(n)}[S^{\nu\sqcup\pi}\otimes M^{(n-h,h)}:S^\psi]\cdot[S^\psi:D^\mu]\\
&=\sum_{\rho\in\Par(\ell)}\sum_{\si\in\Par(n-\ell)}\sum_{j=\max\{0,h+\ell-n\}}^{\min\{h,\ell\}}\Bigl([S^\nu\otimes M^{(\ell-j,j)}:S^{\rho}]\cdot[S^\rho:D^{\mu^{r\leq 2m}}]\\
&\hspace{60pt}[S^\pi\otimes M^{(n-\ell-h+j,h-j)}:S^\si]\cdot[S^\si:D^{\mu^{r>2m}}]\Bigr)
\end{align*}

By \cref{L101123_3,L131123} if $[S^{\nu\sqcup\pi}\otimes M^{(n-h,h)}:S^\psi]=0$ then $|\psi^{r\leq 2m}|<|(\nu\sqcup\pi)^{r\leq m}|=|\nu|=\ell$. By \cref{reg}, $[S^\psi:D^\mu]=0$ if $|\psi^{r\leq 2m}|>\ell=|\mu^{r\leq 2m}|$. By the same result, $[S^\psi:D^\mu]=0$ also if $|\psi^{r\leq 2m}|=\ell$ and at least one of $\psi_{2m}<\mu_{2m}$ or $\psi_{2m+1}>\mu_{2m+1}$ holds.

Similarly $[S^\nu\otimes M^{(\ell-j,j)}:S^{\rho}]=0$ if $h(\rho)>2h(\nu)=2m$, $[S^\rho:D^{\mu^{r>2m}}]=0$ if $\rho_{2m}<\mu_{2m}$ (in particular if $h(\rho)<2m$) and $[S^\si:D^{\mu^{r>2m}}]=0$ if $\si_1>\mu_{2m+1}$.

So it is enough to check that
\begin{align*}
&\sum_{{\psi\in\Par(n):|\psi^{r\leq 2m}|=\ell}\atop{\psi_{2m}\geq\mu_{2m},\psi_{2m+1}\leq\mu_{2m+1}}}[S^{\nu\sqcup\pi}\otimes M^{(n-h,h)}:S^\psi]\cdot[S^\psi:D^\mu]\\
&=\sum_{{\rho\in\Par(\ell):}\atop{h(\rho)=2m,\rho_{2m}\geq \mu_{2m}}}\sum_{{\si\in\Par(n-\ell):}\atop{\si_1\leq\mu_{2m+1}}}\sum_{j=\max\{0,h+\ell-n\}}^{\min\{h,\ell\}}\Bigl([S^\nu\otimes M^{(\ell-j,j)}:S^{\rho}]\cdot[S^\rho:D^{\mu^{r\leq 2m}}]\\
&\hspace{60pt} [S^\pi\otimes M^{(n-\ell-h+j,h-j)}:S^\si]\cdot[S^\si:D^{\mu^{r>2m}}]\Bigr),
\end{align*}
which holds by \cref{T1,L111223_4} (identifying pairs $(\rho,\si)$ as above with $(\psi^{r\leq 2m},\psi^{r>2m})$).

\section{Proof of \cref{t2}}\label{st2}

Let $\ell:=|\mu^{c>m}|=|\la^{c>2m}|$. Since $|\mu^{c>m}|=|\la^{c>2m}|$, the assumption $\mu\unrhd\bdbl(\la)$ is equivalent to $\mu^{c\leq m}\unrhd\bdbl(\la^{c\leq 2m})$ and $\mu^{c>m}\unrhd\bdbl(\la^{c>2m})$ (holding simultaneously).

By definition $b=\la'_{2m}$, so that $2b\leq\bdbl(\la^{c\leq 2m})'_m\leq 2b+1$. As $\mu^{c\leq m}$ has $m$ columns and $\mu^{c\leq m}\unrhd\bdbl(\la^{c\leq 2m})$, it follows that
\[\mu'_m\geq \bdbl(\la^{c\leq 2m})'_m\geq 2b\]
and then also that $|\mu^{r>2b,c\leq m}|=n-\ell-2bm=|\la^{r>b,c\leq 2m}|$.

From $\mu^{c>m}\unrhd\bdbl(\la^{c>2m})$ it follows that
\[\mu'_{m+1}=h(\mu^{c>m})\leq h(\bdbl(\la^{c>2m}))\leq 2\la'_{2m+1}\leq 2\la'_{2m}\leq 2b.\]

Since $\la,\mu\in\Par_2(n)$ it follows that $2b-1\leq \mu_{m+1}'\leq 2b$ and $2b\leq\mu_m'\leq 2b+1$. As $\la_{2m}'=b$ by definition, $\la'_{2m+1}\leq b$ and
\begin{align*}
&\la^{r>b,c\leq 2m}=\la^{r>b},&&\mu^{r>2b,c\leq m}=\mu^{r>2b},\\
&\la^{c>2m}+(2m^b)=\la^{r\leq b},&&\mu^{c>m}+(m^{2b})=\mu^{r\leq 2b}.
\end{align*}
In particular $|\mu^{r\leq 2b}|=|\la^{r\leq b}|$ and then by \cref{t1},
\begin{align*}
&[S(\la,\de):D^{\mu}]=2^{a(\la^{r\leq b})a(\la^{r>b})}[S(\la^{r\leq b},\eps''):D^{\mu^{r\leq 2b}}]\cdot[S(\la^{r>b},\eps'):D^{\mu^{r>2b}}]\\
&=2^{a(\la^{r\leq b})a(\la^{r>b,c\leq 2m})}[S(\la^{r\leq b},\eps''):D^{\mu^{r\leq 2b}}]\cdot[S(\la^{r>b,c\leq 2m},\eps'):D^{\mu^{r>2b,c\leq m}}].
\end{align*}

Since $\mu^{c\leq m}\unrhd\bdbl(\la^{c\leq 2m})$, we have that $\la_{2m+1}'\geq\mu_{m+1}'/2\geq (2b-1)/2$. Further $\la_{2m+1}'\leq b$ by definition of $b$. So $\la_{2m+1}'=b$ and then $a(\la^{r\leq b})=a(\la^{c>2m})$ (that is $\eps''=\pm\eps$) as $\la^{c>2m}+(2m^b)=\la^{r\leq b}$. Thus it is enough to prove that
\[[S(\la^{r\leq b},\eps):D^{\mu^{r\leq 2b}}]=[S(\la^{c>2m},\eps):D^{\mu^{c>m}}]\]
or equivalently (in view of \cref{L151123_2}) that
\[[S(\la^{r\leq b}):D^{\mu^{r\leq 2b}}]=[S(\la^{c>2m}):D^{\mu^{c>m}}].\]

By the previous part of \cref{st2}, $\la_{2m+1}'=b$, $2b-1\leq \mu_{m+1}'\leq 2b$, $\la^{c>2m}+(2m^b)=\la^{r\leq b}$ and $\mu^{c> m}+(m^{2b})=\mu^{r\leq 2b}$.

It is thus enough to prove that
\[[S(\nu):D^{\pi}]=[S(\nu+(2^b)):D^{\pi+(1^{2b})}]\]
whenever $\nu,\pi\in\Par_2(N)$ for some $N$ with $h(\nu)=b$ and $2b-1\leq h(\pi)\leq 2b$, since then repeated application of this starting from $\la^{c>2m}$ and $\mu^{c>m}$ yields the results. We may assume that $b\geq 1$, in which case $N\geq 1$ (as $N=|\nu|$ and $h(\nu)=b\geq 1$). 

By \cref{bsm} there exist coefficient $t_h$ with
\begin{align*}
[D^{\dbl(N)}]&=\sum_{h=0}^{\lfloor(N-1)/2\rfloor}t_h[M^{(\lceil(N+1)/2+h,\lfloor(N-1)/2\rfloor-h)}],\\
[D^{\dbl(N+2b)}]&=\sum_{h=0}^{\lfloor(N-1)/2\rfloor+b}t_h[M^{(\lceil(N+1)/2+b+h,\lfloor(N-1)/2\rfloor+b-h)}]
\end{align*}
and by \cref{L131123_3} there exist coefficients $d_\al$, $f_\be$, $g_\ga$ with
\begin{align*}
\sum_{{\al\in\Par(N):}\atop{h(\al)=b}}d_\al[S^\al\otimes S((N))]&=[S(\nu)]+\sum_{{\be\in\Par_2(N):}\atop{h(\be)<b}}f_\be[S(\be)],\\
\sum_{{\al\in\Par(N):}\atop{h(\al)=b}}d_\al[S^{\al+(2^b)}\otimes S((N+2b))]&=[S(\nu+(2^b))]+\sum_{{\ga\in\Par_2(N+2b):h(\ga)<b}\atop{\text{or }h(\ga)=b\text{ and }\ga_b\leq 2}}g_\ga[S(\ga)].
\end{align*}

So, by \cref{regs,bs},
\begin{align*}
&[S(\nu):D^\pi]=\sum_{{\al\in\Par(N):}\atop{h(\al)=b}}d_\al[S^\al\otimes S((N)):D^\pi]\\
&=\sum_{{\al\in\Par(N):}\atop{h(\al)=b}}\sum_{h=0}^{\lfloor(N-1)/2\rfloor}d_\al t_h2^{a(N)}[S^\al\otimes M^{(\lceil(N+1)/2+h,\lfloor(N-1)/2\rfloor-h)}:D^\pi]\\
&=\sum_{{\al\in\Par(N):}\atop{h(\al)=b}}\sum_{h=0}^{\lfloor(N-1)/2\rfloor}\sum_{\psi\in\Par(N)}\Bigl(d_\al t_h2^{a(N)}\\
&\hspace{60pt}[S^\al\otimes M^{(\lceil(N+1)/2+h,\lfloor(N-1)/2\rfloor-h)}:S^\psi]\cdot[S^\psi:D^\pi]\Bigr).
\end{align*}
By \cref{L111223_2,L111223}, $[S^\al\otimes M^{(N-k,k)}:S^\psi]=0$ for every $0\leq k\leq N$ if $h(\psi)>2h(\al)=2b$. So
\begin{align*}
[S(\nu):D^\pi]&=\sum_{{\al\in\Par(N):}\atop{h(\al)=b}}\sum_{h=0}^{\lfloor(N-1)/2\rfloor}\sum_{{\psi\in\Par(N):}\atop{h(\psi)\leq 2b}}\Bigl(d_\al t_h2^{a(N)}\\
&\hspace{60pt}[S^\al\otimes M^{(\lceil(N+1)/2+h,\lfloor(N-1)/2\rfloor-h)}:S^\psi]\cdot[S^\psi:D^\pi]\Bigr).
\end{align*}
Similarly
\begin{align*}
&[S(\nu+(2^b)):D^{\pi+(1^{2b})}]=\sum_{{\al\in\Par(N):}\atop{h(\al)=b}}\sum_{h=0}^{\lfloor(N-1)/2\rfloor+b}\sum_{{\psi\in\Par(N):}\atop{h(\psi)\leq 2b}}\Bigl(d_\al t_h2^{a(N+2b)}\\
&\hspace{10pt}[S^{\al+(2^b)}\otimes M^{(\lceil(N+1)/2+b+h,\lfloor(N-1)/2\rfloor+b-h)}:S^{\psi+(1^{2b})}]\cdot[S^{\psi+(1^{2b})}:D^{\pi+(1^{2b})}]\Bigr).
\end{align*}

By \cref{t2}, $[S^{\psi+(1^{2b})}:D^{\pi+(1^{2b})}]=[S^{\psi}:D^{\pi}]$. Further by \cref{L181223},
\begin{align*}
&[S^{\al+(2^b)}\otimes M^{(\lceil(N+1)/2+b+h,\lfloor(N-1)/2\rfloor+b-h)}:S^{\psi+(1^{2b})}]\\
&=[S^\al\otimes M^{(\lceil(N+1)/2+h,\lfloor(N-1)/2\rfloor-h)}:S^\psi]
\end{align*}
if $0\leq h\leq \lfloor(N-1)/2$ and it is $0$ otherwise. By definition, $a(N)=a(N+2b)$ since $N\geq 1$. The claim follows.

\end{document}